\colorlet{lred}{red!40}
\colorlet{lgreen}{green!40}
\colorlet{lblue}{blue!40}
\newcommand{\rot}{}
\newcommand{\half}{^\infty_0 }
\newcommand{\R}{\mathbb{R}}
\newcommand{\sph}{\mathbb{S}}
\newcommand{\NN}{\mathbb{N}}
\newcommand{\xx}{\mathbf{x}}
\newcommand{\ttheta}{\boldsymbol{\theta}}
\newcommand{\oomega}{\boldsymbol{\omega}}
\newcommand{\xxi}{\boldsymbol{\xi}}
\newcommand{\ca}{c_1}
\newcommand{\cb}{c_2}
\newcommand{\mf}{\mathtt{ \mathbf{m} }}
\newcommand{\nf}{\mathtt{ \mathbf{n} }}
\newcommand{\Sin}{\mathtt{S}  }
\newcommand{\Cos}{\mathtt{C}  }
\newcommand{\gdata}{\mathtt{g}  }
\newcommand{\Ntime}{\mathtt{N}_t}
\newcommand{\Nrad}{\mathtt{N}_r}
\newcommand{\Nangle}{\mathtt{ {N_\theta} }}
\newcommand{\Br}{B_R(0)}
\newcommand{\Sr}{S_R}
\newcommand{\Mo}{\mathbf{M}}
\newcommand{\So}{\mathbf{S}}
\newcommand{\CCo}{\mathbf{C}}
\newcommand{\kl}[1]{(#1)}
\newcommand{\bkl}[1]{\left(#1\right)}
\newcommand{\abs}[1]{\lvert#1\rvert}
\newcommand{\norm}[1]{\lVert#1\rVert}
\newcommand{\set}[1]{\{#1\}}
\newcommand*\bigcdot{\mathpalette\bigcdot@{.6}}
\newcommand*\bigcdot@[2]{\mathbin{\vcenter{\hbox{\scalebox{#2}{$\m@th#1\bullet$}}}}}
\newcommand\inner[2]{{#1}\bigcdot {#2}}
\newcommand{\rmd}[1]{\mathrm d#1}
\newtheorem{theorem}{Theorem}
\newtheorem{cor}{Corollary}
\newtheorem{lem}{Lemma}
\numberwithin{equation}{section}
\numberwithin{figure}{section}
\numberwithin{table}{section}
\title{Photoacoustic Tomography with Direction Dependent Data:
An Exact Series Reconstruction Approach}
\date{}
\author{Gerhard Zangerl}
 \affil{Department of Mathematics, University of Innsbruck\authorcr
Technikerstrasse 13, 6020 Innsbruck, Austria
 \authorcr E-mail:  \texttt{gerhard.zangerl@uibk.ac.at} }
\author{Sunghwan  Moon}
 \affil{Department of Mathematics, College of Natural Sciences\authorcr
Kyungpook National University, Daegu 41566, Republic of Korea
 \authorcr E-mail:  \texttt{sunghwan.moon@knu.ac.kr} }
\author{Markus Haltmeier}
 \affil{Department of Mathematics, University of Innsbruck\authorcr
Technikerstrasse 13, 6020 Innsbruck, Austria
 \authorcr E-mail:  \texttt{markus.haltmeier@uibk.ac.at} }
\begin{document}

\maketitle

\begin{abstract}
Photoacoustic image reconstruction often assumes that the
restriction of the acoustic pressure on the  detection surface is given.
However, commonly used detectors often have a certain directivity and frequency dependence, in which case the measured data are more  accurately  described as
a linear combination of the acoustic pressure and its normal derivative on  the detection surface.
In this paper, we consider the inverse source problem for data that are a combination of an acoustic pressure of the wave equation and its normal derivative
For the special case of a spherical detection geometry we  derive
exact frequency domain reconstruction formulas. We present
numerical results showing the robustness and validity of the derived formulas.
Moreover, we compare several different combinations of the pressure and its normal derivative showing that used measurement  model significantly
affects the recovered initial pressure.

\medskip \noindent \textbf{Keywords:}
Photoacoustic tomography;
spherical geometry;
image reconstruction;
wave equation;
series inversion;
reconstruction formula;
Neumann data.

\medskip \noindent \textbf{AMS subject classifications:}
44A12, 65R32, 35L05, 92C55.
\end{abstract}

\section{Introduction}
\label{sec:intro}

Photoacoustic Tomography (PAT) is a hybrid imaging technique that combines high optical contrast with good ultrasonic resolution. It is based on the generation of an ultrasound wave  inside an object of interest by pulses optical illumination. The initial pressure distribution of the induced sound wave encodes the optical absorption properties of the object, which are of great interest in medical diagnostics. PAT has proven to be very promising for medical applications like functional brain imaging of small animals, early cancer diagnostics, and imaging of vasculature \cite{beard2011biomedical,wang2009multiscale}.

\begin{figure}[htb!]
	\begin{center}	
	\begin{tikzpicture}
	\foreach \a in {1,2,...,17}
	{
    \draw[->, -latex , thick] (\a *360/17:2cm)--(\a *360/17:3cm);	
    }
	\draw[thick] (0,0) circle (2cm);
	\foreach \a in {1,2,...,17}
	{
		 \draw[fill = red] (\a*360/17: 2cm) circle (2 pt);
		 	}
	\draw[fill = green!50!black, line width = 0pt] (1,0) circle (0.5cm);
	\draw[thick, color = green!50!black] (1,0) circle (2);
	\draw[thick, color = green!50!black] (1,0) circle (2.2);
	\draw[thick, color = green!50!black] (1,0) circle (2.4);
	\draw[thick, color = green!50!black] (1,0) circle (2.6);
	\draw[thick,, color = green!50!black] (1,0) circle (2.8);
	\draw[thick, color = green!50!black] (1,0) circle (3);
	\end{tikzpicture}
    \end{center}\caption{Pressure wave  (green circles) emitted by an
    a disk.  Each detector is displayed as a red dot on the detection surface,
    which measures a combination of  the  pressure  and its  normal derivative.}
    \label{fig:setup}
\end{figure}
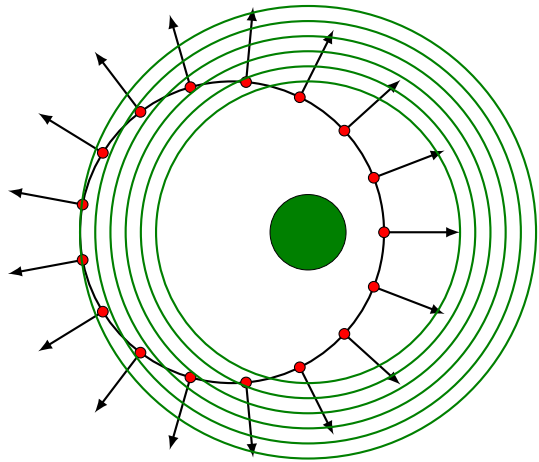

In a typical PAT setup, the induced acoustic waves  are recorded by several point-like detectors outside the  support of the  investigates sample.  Typically, the detectors  are located on a surface $S$ that  fully or partially  encloses the volume $\Omega$ in which the object of interest is contained; compare Figure \ref{fig:setup}.
In most reconstruction approaches, the measured  data are identified with the  restriction of the acoustic wave to the surface $\partial \Omega$ or sampled values of it, possible convolved in time with the detector impulse response function.
{\rot Typical piezoelectric sensors,  which are commonly used in PAT, however, have a
directional  dependence and are most sensitive in the normal direction to the  sensor surface. Moreover, at the resonance frequency they are more sensitive than at lower
frequencies. As noted in  \cite{xu2004time}, such measurement data are rather  modeled by the  combination of the pressure field and its normal derivative, than the  pressure alone. This is also suggested by visual comparison with real data \cite{roitner2014deblurring,paltauf2017piezoelectric}. Systematic theoretical and experimental studies  on PAT sensor modeling  are interesting lines of
future research.}

{\rot Let $f$ be the smooth compactly supported initial pressure,
and $p \colon \R^n \times (0, \infty) \to \R$
the induced pressure wave that satisfies
\begin{alignat}{2}\label{eq:wavsys1}
\partial^2_t p(\xx,t) - \Delta_\xx p(\xx,t) &= 0 \qquad &&(\xx,t)\in\R^n\times (0,\infty)\nonumber \\
p(\xx,0)   			&=f(\xx) \qquad          &&\xx\in\R^n   \\
\partial_tp(\xx,0)  &=0       \qquad         &&\xx\in\R^n \,. \nonumber
\end{alignat}
In order to incorporate directional dependence, we model the data  of a detector located at $\xx \in \partial \Omega$ by
\begin{equation}\label{eq:dirdata}
m_{\ca, \cb}(\xx, t) =  \ca p(\xx,  t)  +  \cb \inner{n(\xx)}{ \nabla p(\xx, t)}   \quad \text{ for } (\xx, t) \in \partial \Omega \times [0,T] \,,
\end{equation}}
where  $\inner{n(\xx)}{ \nabla p(\xx, t)}$ is the normal derivative of the pressure, $n(\xx)$ the outwards pointing normal of the surface at $\xx$,
$T$ the final measurement time,
and $\ca, \cb \in \R$ are constants.  {\rot The goal is to recover the initial data $f$ from data in \eqref{eq:dirdata}.}
To the best of our knowledge, this
paper is the first work investigating PAT with direction dependent data of the form~\eqref{eq:dirdata} allowing arbitrary values of $\ca,\cb$.

The standard  image reconstruction problem in PAT corresponds to the special case
$\ca \neq 0$ and $\cb =0$ in our data model \eqref{eq:dirdata}. Many methods for reconstructing the initial pressure distribution have been derived in the recent years in various situations. This includes, for example, different detection geometries with variable or constant speed of sound, or limited view situations. Theoretical questions concerning uniqueness and stability of the inverse source problem have also been  investigated \cite{haltmeier2017analysis,kuchment2008mathematics,kunyansky2008thermo,belhachmi2016direct,stefanov2009thermo}. A practically important case assumes a constant speed of sound. In this case, several exact reconstruction formulas have been developed \cite{kunyansky2011reconstruction,kunyansky2015inversion,Haltmeier14,Halt2d,nguyen2009,finch04det,natterer2012photo,palamodov2012,haltmeier2009frequency,agranovsky2007uniqueness,FinHalRak07,zangerl2009exact,kunyansky2007series,haltmeier2007thermoacoustic}. Among these formulas so-called series expansion formulas provide very fast and accurate reconstructions. They give an expansion of the initial pressure in terms of eigenfunctions of the Laplacian \cite{haltmeier2009frequency,agranovsky2007uniqueness,zangerl2009exact,kunyansky2007series,haltmeier2007thermoacoustic}.

The case where measurements are modeled  by the normal derivative of the pressure
($\ca=0$ and $\cb \neq 0$ in \eqref{eq:dirdata})
 is studied much less. It has been considered in \cite{Finch2005,finch2007spherical}, where an  explicit  inversion formula of the backprojection type  is derived  for the case that the detection surface is a sphere in three spatial  dimensions.
 Besides that, we are not aware  of any results for the case $\cb \neq 0$. We are not aware of any results when $\ca,\cb$ are both non-vanishing.

In this paper, we provide series expansion reconstruction formulas for the direction dependent data model \eqref{eq:dirdata} allowing arbitrary values of $\ca, \cb \in \R$, for
the case that the measurement surface is a sphere. Our approach
is  based on expansions in spherical harmonics and an  explicit formula  relating the  spherical harmonics coefficients of the direction dependent data, as a function of time, and the Fourier coefficients of the initial pressure distribution $f$, as a function of distance to the origin
(see Section \ref{sec:pat}).
By using Fourier Bessel series in the radial variable,
in Section \ref{sec:inv} we derive inversion formulas that allows for a
 stable implementation. Numerical implementation and results
 are presented in Section \ref{sec:num}. The paper ends with a conclusion
 and outlook presented in Section~\ref{sec:conclusion}

\section{Preliminaries}
\label{sec:pat}

In PAT, sound propagation  is commonly described by an acoustic
pressure wave $p \colon \R^n \times [0, \infty) \to \R$ that
satisfies the initial value problem\eqref{eq:wavsys1}.
In this work, we assume that the initial pressure distribution satisfies  $f \in \mathcal{C}_0^\infty \left( \Omega   \right)$. In PAT with direction dependent data we model measurement data   by \eqref{eq:dirdata}, where  $p$ is the solution of \eqref{eq:wavsys1}.
The aim  is to recover the initial pressure distribution $f$ from such data. To the best of our knowledge, this is a new inverse  problem for the wave equation that has not been considered so far. For the  practical application, the cases $n =2$ and $n =3$ are of relevance. In particular, the case $n =2$ appears from direction dependent measurements with integrating line detectors \cite{burgholzer2007temporal,haltmeier2007thermoacoustic}.

In particular, we study the case where $\Omega  = \Br$ is the ball of radius $R$ centered at the origin.
In this  situation, we can write the measurement data in the form
\begin{align}\label{eq:data}
g (\ttheta, t) :=   \ca p(R\ttheta , t) +  \cb   \Bigl[\inner {\ttheta}{  \nabla} p (\xx, t) \bigr]_{\xx = R \ttheta}
\quad \text { for } (\ttheta, t ) \in \sph^{n-1} \times [0,T]\,.
\end{align}
We write  $\Mo_{\ca,\cb}$ for the  operator  that takes the initial data $f$ in \eqref{eq:wavsys1}
to the corresponding boundary data.  The constants $\ca, \cb \in \R$ are weights describing the  contribution of the pressure and its  normal derivative, respectively, to the measures data.
The operator $\Mo_{1,0}$  is the standard PAT forward operator, and the operator $\Mo_{0,1}$  corresponds to the case where the  data only consists of the normal derivative.
Clearly, we have $\Mo_{\ca,\cb} =  \ca \Mo_{1,0} +
\cb \Mo_{0,1}$.

For the derivation of the inversion formulas we  use $T = \infty$. Because of the strict Huygens principle,  in the case of odd spatial dimension, we have $p(\xx, t) = 0$ for $t> 2R$, which
 yields exact inversion for any measurement  time  $T \geq 2R$. For even spatial dimension,
 this is not the case and, strictly taken, our inversion formulas  are exact only for $T =  \infty$. Deriving exact formulas using data over a finite time  interval $[0,T]$
 in even dimension is an interesting open issue. For the standard PAT operator $\Mo_{1,0}$  such a formula has been derived in \cite[Theorem 1.4]{FinHalRak07}.

\subsection{Notation}\label{sec:notation}

Our approach is based on Fourier methods that lead to a relation between
measurement data and the initial pressure distribution in the frequency domain.
We denote the involved transforms by
\begin{align*}
   \hat f   (\xxi)  &: =  \int_{\R^n}  f(\xx) e^{-  i \inner{ \xx}{\xxi}  } {\rm d}\xx,
&& \quad  \text{ for }  f \in L^{1}\left( \R^n \right), \,  \xxi \in \R^n     \\
  \CCo  \left\{ g \right\}   (\lambda)
   &:=      \int_0^\infty g(t) \cos(\lambda t) {\rm d}t,
&& \quad  \text{ for }  g \in L^{1}(0,\infty), \,  \lambda > 0\\
  \So    \left\{ g \right\}     (\lambda)
  &:=      \int_0^\infty g(t) \sin(\lambda t) {\rm d}t,
&& \quad  \text{ for }  g \in L^{1}(0,\infty), \,  \lambda > 0,
\end{align*}
which are the Fourier, cosine and sine transforms, respectively.

Moreover, we denote by $Y_{\ell,k}(\ttheta)$ the spherical harmonics \cite{natterer1986math},
which form a complete orthonormal system in $L^2(\sph^{n-1})$. In particular,
$$
\forall f \in L^2(\R^n)
\colon \quad
f(\rho  \ttheta )
=   \sum_{\ell = 0}^\infty  \sum_{k=0}^{N(n,\ell)}
f_{\ell,k}(\rho)  \,  Y_{\ell,k}(\ttheta) \,, $$
where   $N(n,\ell)= (2\ell+n-2)(n+\ell-3)!/(\ell!(n-2)!)$ for $\ell \in \NN$
and $N(n,0):=1$. We  write  $J_\nu$ for the $\nu$-th order Bessel function of the first kind and denote by $w_{j,\ell}$ the $j$-th positive root
of $J_{\ell+(n-2)/2}$ .

\subsection{Relations between transform coefficients}\label{sec:relations}

For the following we use the spherical harmonics expansions
of the measurement data  $g = \Mo_{\ca,\cb} f $  in \eqref{eq:data} and the
Fourier transform $\hat f$ of the initial pressure,
\begin{align}\label{eq:e1}
&\forall   (t,\ttheta)\in (0,\infty)\times \sph^{n-1} \colon
\quad
g(\ttheta,t)
= \sum_{\ell=0}^\infty\sum^{N(n,\ell)}_{k=0}g_{\ell,k}(t)Y_{\ell,k}(\ttheta)
\\ \label{eq:e2}
& \forall (\lambda,\oomega)\in (0,\infty)\times \sph^{n-1}  \colon
\quad
\Hat{f}(\lambda\oomega)
=\sum_{\ell=0}^\infty\sum^{N(n,\ell)}_{k=0} \Hat{f}_{\ell,k}(\lambda)Y_{\ell,k}(\oomega) \,.
\end{align}

The following Lemma is the key to our results.

\begin{lem}\label{lem:reloffandp}
Let $f\in \mathcal{C}^\infty_0(\R^n)$ be the initial pressure distribution in \eqref{eq:wavsys1} and let $g = \Mo_{\ca,\cb} f $ be the
corresponding measurement data given by \eqref{eq:data}. Then,
\begin{multline}\label{eq:cplk}
\forall \lambda>0 \colon \quad \CCo\set{g_{\ell,k}}(\lambda)=
 (2R\pi)^{-{\frac{n}{2}}}\pi 2^{-1} i^\ell\\
\times \left[(R\ca+\cb \ell)J_{\ell+\frac{n-2}{2}}(R\lambda) - \cb R\lambda J_{\ell+\frac{n}{2}}(R\lambda)\right] \,
\hat f_{\ell,k}(\lambda) \lambda^{\frac{n}2}\,.
\end{multline}
\end{lem}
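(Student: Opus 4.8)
The plan is to represent the wave $p$ through the spatial Fourier transform, expand the resulting plane waves in spherical harmonics, isolate the coefficient of $Y_{\ell,k}$, apply the boundary operator $\ca+\cb\,\partial_\rho$ at the detection radius $\rho=R$, and finally take the cosine transform in time. Applying the Fourier transform in $\xx$ to \eqref{eq:wavsys1} shows that the spatial Fourier transform of $p(\cdot,t)$ equals $\hat f(\xxi)\cos(\abs{\xxi}t)$; since $f\in\mathcal{C}^\infty_0(\R^n)$ its Fourier transform is Schwartz, so
\[ p(\xx,t)=(2\pi)^{-n}\int_{\R^n}\hat f(\xxi)\cos(\abs{\xxi}t)\,e^{i\inner{\xx}{\xxi}}\,\rmd\xxi \]
converges absolutely and may be differentiated under the integral sign. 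Passing to polar coordinates $\xxi=\lambda\oomega$ with $\lambda>0$, $\oomega\in\sph^{n-1}$, inserting the Jacobi--Anger (plane wave) expansion
\[ e^{i\lambda\inner{\xx}{\oomega}}=(2\pi)^{n/2}\sum_{\ell,k}i^\ell\,(\lambda\abs{\xx})^{-\frac{n-2}{2}}\,J_{\ell+\frac{n-2}{2}}(\lambda\abs{\xx})\,Y_{\ell,k}(\xx/\abs{\xx})\,\overline{Y_{\ell,k}(\oomega)}, \]
and using \eqref{eq:e2} together with the orthonormality of $\{Y_{\ell,k}\}$ on $\sph^{n-1}$, one reads off for the coefficients $p_{\ell,k}(\rho,t)$ in $p(\rho\ttheta,t)=\sum_{\ell,k}p_{\ell,k}(\rho,t)Y_{\ell,k}(\ttheta)$ that
\[ p_{\ell,k}(\rho,t)=(2\pi)^{-n/2}\,i^\ell\int_0^\infty\hat f_{\ell,k}(\lambda)\cos(\lambda t)\,(\lambda\rho)^{-\frac{n-2}{2}}\,J_{\ell+\frac{n-2}{2}}(\lambda\rho)\,\lambda^{n-1}\,\rmd\lambda. \]

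Next I would pass to the boundary data. On $\sph^{n-1}$ the operator $\inner{\ttheta}{\nabla}$ evaluated at $\xx=R\ttheta$ is the radial derivative $\partial_\rho|_{\rho=R}$, so the $Y_{\ell,k}$-coefficient of $g=\Mo_{\ca,\cb}f$ in \eqref{eq:data} is $g_{\ell,k}(t)=\ca\,p_{\ell,k}(R,t)+\cb\,\partial_\rho p_{\ell,k}(\rho,t)\big|_{\rho=R}$. The computational heart of the proof is to differentiate the radial profile: writing $\mu:=(n-2)/2$ and using the Bessel recurrence $J_\nu'(x)=(\nu/x)J_\nu(x)-J_{\nu+1}(x)$,
\[ \partial_\rho\!\left[\rho^{-\mu}J_{\ell+\mu}(\lambda\rho)\right]=\rho^{-\mu-1}\!\left[\ell\,J_{\ell+\mu}(\lambda\rho)-\lambda\rho\,J_{\ell+\mu+1}(\lambda\rho)\right], \]
the decisive point being that the power $\mu$ of $\rho$ differs from the Bessel order $\ell+\mu$ exactly by $\ell$. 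Substituting this, evaluating at $\rho=R$ and collecting the powers of $R$ and $\lambda$ (using $n-1-\mu=n/2$, $\mu+1=n/2$ so that $(2\pi)^{-n/2}R^{-\mu-1}=(2R\pi)^{-n/2}$, and $\ell+\mu+1=\ell+n/2$) yields
\[ g_{\ell,k}(t)=(2R\pi)^{-n/2}\,i^\ell\int_0^\infty\hat f_{\ell,k}(\lambda)\cos(\lambda t)\Bigl[(R\ca+\cb\ell)J_{\ell+\frac{n-2}{2}}(R\lambda)-\cb R\lambda\,J_{\ell+\frac{n}{2}}(R\lambda)\Bigr]\lambda^{n/2}\,\rmd\lambda. \]

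It remains to apply $\CCo$ in $t$. Since $\hat f$ is Schwartz the coefficient $\hat f_{\ell,k}$ decays rapidly, while the bracketed Bessel combination grows at most like $\lambda^{1/2}$, so the integrand above is integrable over $(0,\infty)$ uniformly in $t$; after justifying the interchange of the $\lambda$- and $t$-integrations (e.g.\ by inserting a regularising factor $e^{-\varepsilon t^2}$ and letting $\varepsilon\downarrow0$) the Fourier cosine inversion identity $\CCo\bigl\{\,\int_0^\infty G(\mu)\cos(\mu\,\cdot\,)\,\rmd\mu\,\bigr\}(\lambda)=\tfrac{\pi}{2}\,G(\lambda)$ produces \eqref{eq:cplk}, writing $\tfrac{\pi}{2}=\pi 2^{-1}$.

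I expect the main obstacle to be the derivative step above: one must resist using the more familiar identity $\tfrac{d}{dx}[x^{-\nu}J_\nu(x)]=-x^{-\nu}J_{\nu+1}(x)$, which applies only when the prefactor power equals the Bessel order; here the two exponents differ, and carrying them separately through the recurrence is precisely what generates both the $J_{\ell+(n-2)/2}$-term (with the weight $\ell$ multiplying $\cb$) and the $J_{\ell+n/2}$-term that appear in \eqref{eq:cplk}. A secondary, essentially routine point is the rigorous justification of swapping the order of integration in the last step, which follows from the Schwartz decay of $\hat f$ but should be spelled out.
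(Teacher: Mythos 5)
Your proposal is correct and follows essentially the same route as the paper: Fourier representation of $p$, the Funk--Hecke/plane-wave expansion to extract the $Y_{\ell,k}$-coefficient, a Bessel differentiation producing the $\ell J_{\ell+\frac{n-2}{2}} - R\lambda J_{\ell+\frac{n}{2}}$ combination, and the cosine-transform inversion $\CCo\CCo = \tfrac{\pi}{2}\,\mathrm{id}$. The only cosmetic difference is that you differentiate the radial profile in $\rho$ via the recurrence $J_\nu'(x)=(\nu/x)J_\nu(x)-J_{\nu+1}(x)$, whereas the paper applies $\lambda\,\partial_{R\lambda}$ to the Funk--Hecke formula using $\partial_x[x^{-\nu}J_\nu(x)]=-x^{-\nu}J_{\nu+1}(x)$ after factoring out $(R\lambda)^\ell$; these are the same computation up to the chain rule.
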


\begin{proof} \rot
Let $p$ be  the solution of \eqref{eq:wavsys1}. Then $p(\xx,t)=(2\pi)^{-n}  \int_{\R^n}\cos(t|\xxi|)e^{i\inner{\xx}{\xxi}} \hat f(\xxi)\rmd\xxi$ and therefore
	$\inner{\ttheta}{\nabla  p}(\xx,t)=(2\pi)^{-n}\int_{\R^n}({i}\inner{\ttheta}{\xxi})\cos(t|\xxi|)e^{{i}\inner{\xx}{\xxi}}\hat f(\xxi)\rmd\xxi $
for all $(\xx,t)\in\R^n\times\R $.
Changing to spherical coordinates, $\lambda\oomega \gets \xxi$, and using the spherical harmonics expansion \eqref{eq:e1}
we can write the direction dependent measurements as
\begin{align*}
g(\ttheta,t)&= \ca p(R\ttheta  , t) +  \cb   \bigl[\inner {\ttheta}{  \nabla} p (\xx, t) \bigr]_{\xx = R \ttheta}
\\
&= (2\pi)^{-n}   \int_{\sph^{n-1}}\int\half[\ca+\cb(i\inner{\ttheta}{\lambda\oomega})] \cos(t\lambda)e^{i\lambda\inner{R\ttheta}{\oomega}}\hat f(\lambda\oomega)\lambda^{n-1}\rmd\lambda \rmd S(\oomega)
\\
&=(2\pi)^{-n}  \int_{\sph^{n-1}} \int\half \left( \ca e^{i\lambda\inner{R\ttheta}{\oomega}}
+ \cb \lambda
\left(
\partial_{R\lambda} e^{i\lambda\inner{R\ttheta}{\oomega}} \right) \right)     \cos(t\lambda)\hat f(\lambda\oomega)\lambda^{n-1 }\rmd\lambda \rmd S(\oomega)
\\
 &=(2\pi)^{-n}\sum_{\ell=0}^\infty\sum^{N(n,\ell)}_{k=0}\int\half \cos(t\lambda)\hat f_{\ell,k}(\lambda)\lambda^{n-1} \left( \ca  A_{\ell,k}  (\ttheta) + \cb B_{\ell,k}  (\ttheta) \right)\rmd\lambda \,,
\end{align*}
where we used the abbreviations
\begin{align*}
A_{\ell,k}(\ttheta) &=  \int_{\sph^{n-1}}e^{i R\lambda \inner{\ttheta}{\oomega}}Y_{\ell,k}(\oomega) \rmd S(\oomega)
\\
B_{\ell,k}(\ttheta) &=  \lambda \left[ \partial_{R\lambda}   \int_{\sph^{n-1}}e^{iR\lambda\inner{\ttheta}{\oomega}}Y_{\ell,k}(\oomega) \rmd S(\oomega) \right]\,.
\end{align*}
According to the Fuck-Hecke theorem \cite{natterer1986math}
\begin{equation}\label{eq:funkhecke}
A_{\ell,k}(\ttheta)   = (2\pi)^{\frac{n}{2}} i^\ell(R\lambda)^{\frac{2-n}{2}}J_{\ell+\frac{n-2}{2}}(R\lambda)Y_{\ell,k}(\ttheta)\,.
\end{equation} 
Moreover, using the identity  $\partial_{R\lambda}[(R\lambda)^{-\nu}J_\nu(R \lambda)]= - (R\lambda)^{-\nu}J_{\nu+1}(R\lambda)$ (see, for example, \cite{folland2009fourier}), we further obtain
\begin{align*}
B_{\ell,k}(\ttheta)  &=
\lambda \, \partial_{R\lambda} \left[    A_{\ell,k}(\ttheta) \right]
\\& = \lambda \,
(2\pi)^{\frac{n}{2}} \, i^\ell \, Y_{\ell,k}(\ttheta)\,
\partial_{R\lambda} \left[ (R\lambda)^{-(n-2)/2}J_{\ell+\frac{n-2}{2}}(R\lambda)\right]
\\& = \lambda \,
(2\pi)^{\frac{n}{2}} \, i^\ell \, Y_{\ell,k}(\ttheta)\,
\partial_{R\lambda} \left[ (R\lambda)^\ell\, (R\lambda)^{-(\ell +(n-2)/2)}J_{\ell+\frac{n-2}{2}}(R\lambda)\right]
\\& = \lambda \,
 (2\pi)^{\frac{n}{2}} \, i^\ell \, Y_{\ell,k}(\ttheta)\,
 \left[ \ell  (R\lambda)^{-n/2 }J_{\ell+\frac{n-2}{2}}(R\lambda)
 - (R\lambda)^{-(n-2)/2}J_{\ell+n/2}(R\lambda)  \right]\\&
 =
(2\pi)^{\frac{n}{2}} \, i^\ell \,
 (R\lambda)^{\frac{2-n}{2}} \,
 \left[ R^{-1} \ell   J_{\ell+\frac{n-2}{2}}(R\lambda)
 -\lambda\, J_{\ell+n/2}(R\lambda)  \right]
 \,
 Y_{\ell,k}(\ttheta)
 \,.
\end{align*}
The last two displayed equations imply $\ca  A_{\ell,k}  (\ttheta) + \cb B_{\ell,k}  (\ttheta) 
= (2\pi)^{\frac{n}{2}} i^\ell\, (R\lambda)^{\frac{2-n}{2}}$
$
Y_{\ell,k}(\ttheta) ( (\ca + R^{-1} \cb  \ell)J_{\ell+\frac{n-2}{2}}(R\lambda) - \cb \lambda\, J_{\ell+n/2}(R\lambda) )$
and therefore
  \begin{multline}\label{eq:e3}
g(\ttheta,t)  = R^{-\frac{n}{2}} (2\pi)^{-{\frac{n}{2}}}\sum_{\ell=0}^\infty\sum^{N(n,\ell)}_{k=0} i^\ell\int \half \cos(t\lambda)\hat f_{\ell,k}(\lambda)
(\lambda)^{\frac{n}{2}} \\
\times \left(  (R \ca + \cb \ell)   J_{\ell+\frac{n-2}{2} }(R\lambda)  - \cb R \lambda  J_{\ell+n/2}(R\lambda) \right)  {\rm d}\lambda \;Y_{\ell,k}(\ttheta)  \,.
\end{multline}
Comparing expansions \eqref{eq:e1} and \eqref{eq:e3} we conclude that
\begin{multline*}
g_{\ell,k}(t)=R^{-\frac{n}{2}} (2\pi)^{-{\frac{n}{2}}} i^\ell\int\half \cos(t\lambda)\hat f_{\ell,k}(\lambda) \lambda^{\frac{n}{2}  } \\
\times  \left(  (R\ca +   \cb \ell)   J_{\ell+\frac{n-2}{2} }(R\lambda)  - \cb R \lambda  J_{\ell+n/2}(R\lambda) \right)  \rmd\lambda\,.
\end{multline*}
This becomes \eqref{eq:cplk} after applying the inversion formula
$g =\frac{2}{\pi}\CCo\CCo{g} $ for the cosine transform; see \cite[Equation~(7.28)]{folland2009fourier}.
\end{proof}

As a first application of  Lemma~\ref{lem:reloffandp}, we obtain a range
condition for the  classical  PAT forward operator $\Mo_{1, 0}$
that maps the initial data $f$ to the solution of \eqref{eq:wavsys1}
on the boundary. More precisely, we have the following result.

{\rot \begin{cor}[Range condition]\label{cor:range}
Let $f \in \mathcal{C}_0^\infty(\Br)$. Then $\CCo\{ p_{\ell,k}\}(R^{-1} w_{j,\ell}) = 0$
where $(p_{\ell,k})_{\ell, k}$ are the  spherical harmonics coefficients of   $\Mo_{1, 0} f$ and $w_{j,\ell}$ is the $j$-th positive zero of $J_{\ell+(n-2)/2}$.
\end{cor}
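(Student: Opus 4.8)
The plan is to simply specialize Lemma~\ref{lem:reloffandp} to the parameter choice $\ca = 1$, $\cb = 0$ and then evaluate the resulting identity at the positive roots of the relevant Bessel function. First I would observe that for $\ca = 1$ and $\cb = 0$ the measurement data $g = \Mo_{1,0} f$ is precisely the restriction of the solution $p$ of \eqref{eq:wavsys1} to $\partial\Br \times (0,\infty)$, so that the spherical harmonics coefficients $g_{\ell,k}$ appearing in \eqref{eq:e1} coincide with the coefficients $p_{\ell,k}$ in the statement of the corollary.

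Next, with $\ca = 1$ and $\cb = 0$ the bracketed factor in \eqref{eq:cplk} collapses to $R\, J_{\ell+(n-2)/2}(R\lambda)$, so Lemma~\ref{lem:reloffandp} gives, for every $\lambda > 0$,
\begin{equation*}
\CCo\set{p_{\ell,k}}(\lambda) = (2R\pi)^{-n/2}\, \pi\, 2^{-1}\, i^\ell\, R\, J_{\ell+\frac{n-2}{2}}(R\lambda)\, \hat f_{\ell,k}(\lambda)\, \lambda^{n/2} \,.
\end{equation*}
Then I would substitute $\lambda = R^{-1} w_{j,\ell}$, so that $R\lambda = w_{j,\ell}$. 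By the very definition of $w_{j,\ell}$ as the $j$-th positive zero of $J_{\ell+(n-2)/2}$, the factor $J_{\ell+(n-2)/2}(R\lambda)$ vanishes; since all remaining factors on the right-hand side are finite (for $f \in \mathcal{C}_0^\infty(\Br)$ the transform $\hat f$ is Schwartz, so each $\hat f_{\ell,k}$ is smooth and rapidly decaying), the whole expression is zero. This yields $\CCo\set{p_{\ell,k}}(R^{-1} w_{j,\ell}) = 0$, which is exactly the claim.

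The argument is essentially immediate once Lemma~\ref{lem:reloffandp} is available, so there is no real obstacle; the only point worth a remark is that the cosine transform $\CCo\set{p_{\ell,k}}$ is well defined, i.e.\ that $p_{\ell,k} \in L^1(0,\infty)$. This follows from the decay of solutions of the free wave equation with smooth compactly supported initial data (in odd dimensions $p$ even has compact support in $t$ by the strong Huygens principle, and $p_{\ell,k}(t) = 0$ for $t > 2R$), which is already implicitly used in the statement of the lemma.
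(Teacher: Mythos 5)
Your proposal is correct and follows exactly the paper's own argument: specialize Lemma~\ref{lem:reloffandp} to $\ca=1$, $\cb=0$ so that the bracket reduces to $R\,J_{\ell+\frac{n-2}{2}}(R\lambda)$, then evaluate at $\lambda = R^{-1}w_{j,\ell}$ where this factor vanishes while the remaining factors stay finite (the paper just notes continuity of $\hat f_{\ell,k}$). Your additional remark on the integrability of $p_{\ell,k}$ is a reasonable extra precaution but does not change the substance of the argument.
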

}

\begin{proof} For $\ca=1$ and $\cb=0$, equation \eqref{eq:cplk} reads
$ \CCo\{p_{\ell,k}\}(\lambda)=(2R\pi)^{-{\frac{n}{2}}} \pi 2^{-1} i^\ell\hat f_{\ell,k}(\lambda)
$ $\lambda^{n/2} R J_{\ell+(n-2)/2}(R\lambda)$.
Because $ \hat f_{\ell,k}(\lambda)$ is  continuous,
this implies
$\CCo( p_{\ell,k})(R^{-1} w_{j,\ell}) = 0$ and concludes
the proof.
\end{proof}

As another  application of  Lemma~\ref{lem:reloffandp},
we derive the  following inversion formula.

\begin{cor}\label{cor:inv1} \rot
Let $f \in \mathcal{C}^\infty_0 \left(\R^n \right)$, let $\ca, \cb \in \R$, and consider the
measurement data $g := \Mo_{\ca,\cb} f$  as in \eqref{eq:data}. Then
\begin{multline}\label{eq:inv1}
f(\xx)=\frac{2}{\pi}\sum_{\ell=0}^\infty\sum^{N(n,\ell)}_{k=0} \bkl{\frac{R}{\abs{\xx}}}^{{\frac{n-2}{2}}}
\\ \times \int\half\frac{\CCo\set{g_{\ell,k}}(\lambda)J_{\ell+\frac{n-2}{2}}(\lambda|\xx|) \rmd\lambda}{ ( \ca + \cb R^{-1} \ell)J_{\ell+\frac{n-2}{2}}(R\lambda)-\cb\lambda J_{\ell+\frac{n}{2}}(R\lambda) } Y_{\ell,k}\left(\frac{\xx}{|\xx|}\right).
\end{multline}
\end{cor}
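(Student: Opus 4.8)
The plan is to invert the relation \eqref{eq:cplk} of Lemma~\ref{lem:reloffandp} for the radial Fourier coefficients $\hat f_{\ell,k}(\lambda)$, and then reassemble $f$ from its Fourier transform. First I would solve \eqref{eq:cplk} algebraically: provided the denominator
\[
D_{\ell}(R\lambda) := (R\ca + \cb\ell) J_{\ell+\frac{n-2}{2}}(R\lambda) - \cb R\lambda\, J_{\ell+\frac{n}{2}}(R\lambda)
\]
does not vanish, one gets
\[
\hat f_{\ell,k}(\lambda) = \frac{2(2R\pi)^{\frac n2}\, \lambda^{-\frac n2}}{\pi\, i^{\ell}}\,
\frac{\CCo\set{g_{\ell,k}}(\lambda)}{D_{\ell}(R\lambda)}\,.
\]
(The factor $i^{-\ell}$ can be written $\overline{i^{\ell}}$; one checks that the final formula is real-valued, as it must be.) This recovers every radial Fourier coefficient of $f$ from the cosine transform of the corresponding spherical-harmonics coefficient of the data.

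Next I would run the standard inverse-Fourier computation in spherical coordinates. Writing $\xx = \abs{\xx}\,\xx/\abs{\xx}$ and using that $Y_{\ell,k}$ diagonalizes the Fourier transform up to a Hankel transform in the radial variable — concretely, the identity $f_{\ell,k}(\rho) = (2\pi)^{-n/2}\, i^{\ell}\, \rho^{-(n-2)/2}\int_0^\infty \hat f_{\ell,k}(\lambda)\, J_{\ell+\frac{n-2}{2}}(\rho\lambda)\, \lambda^{n/2}\,\rmd\lambda$, which is exactly the Funk–Hecke computation \eqref{eq:funkhecke} read backwards with the Fourier inversion formula — I substitute the expression for $\hat f_{\ell,k}(\lambda)$ just obtained. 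The powers of $\lambda$ combine: the $\lambda^{-n/2}$ from the inversion of \eqref{eq:cplk} cancels against the $\lambda^{n/2}$ in the Hankel integrand, the factors $i^{\ell}\cdot i^{-\ell} = 1$, the constants $(2\pi)^{-n/2}(2R\pi)^{n/2} = R^{n/2}2^{n/2-n/2}\cdot\ldots$ collapse to a clean prefactor, and the radial powers $\rho^{-(n-2)/2}$ together with the $R^{n/2}$ and a residual $R^{-1}$-type factor reorganize into $(R/\abs{\xx})^{(n-2)/2}$. Summing over $\ell,k$ against $Y_{\ell,k}(\xx/\abs{\xx})$ gives \eqref{eq:inv1}, where I note that the denominator in \eqref{eq:inv1} is $D_{\ell}(R\lambda)/R = (\ca + \cb R^{-1}\ell)J_{\ell+\frac{n-2}{2}}(R\lambda) - \cb\lambda J_{\ell+\frac{n}{2}}(R\lambda)$, matching the stated form, with the stray factor of $R$ absorbed into the $(R/\abs{\xx})^{(n-2)/2}$ prefactor.

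The main obstacle is not the bookkeeping but the division by $D_\ell(R\lambda)$: the formula is only formally valid where this function is nonzero, and for $\cb = 0$ (the classical case) $D_\ell$ has zeros precisely at $\lambda = R^{-1}w_{j,\ell}$, as recorded in Corollary~\ref{cor:range}. Two points need care. First, at such zeros the numerator $\CCo\set{g_{\ell,k}}(\lambda)$ also vanishes — this is exactly the content of the range condition — so the quotient extends continuously and the integral in \eqref{eq:inv1} makes sense as an improper/principal-value integral; since $f\in\mathcal C_0^\infty$ the coefficients $\hat f_{\ell,k}$ are smooth and rapidly decaying, so all the radial integrals and the outer sum converge. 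Second, for general $\cb\neq 0$ one should remark that $D_\ell$ still has only isolated zeros (the two Bessel functions involved have no common zeros, by the standard fact that $J_\nu$ and $J_{\nu+1}$ share none), so the same removable-singularity reasoning applies. I would therefore state the corollary's proof as: invert \eqref{eq:cplk} pointwise in $\lambda$ away from the zeros of $D_\ell$, note the removable-singularity/convergence point via the smoothness of $f$, then apply the inverse Fourier transform in spherical coordinates and simplify the constants and powers to arrive at \eqref{eq:inv1}.
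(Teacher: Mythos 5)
Your proposal is correct and follows essentially the same route as the paper: both invert the relation of Lemma~\ref{lem:reloffandp} and then undo the Funk--Hecke/Hankel-transform identity \eqref{eq:coeffofff} relating $\hat f_{\ell,k}$ to $f_{\ell,k}$ (your ``inverse Fourier transform in spherical coordinates'' is exactly the paper's inverse Hankel transform step), and the constants and powers of $\lambda$, $\rho$, $R$ work out as you indicate. Your extra remark that the quotient extends across the zeros of the denominator (since for exact data the numerator is, by Lemma~\ref{lem:reloffandp}, a constant multiple of that denominator times $\hat f_{\ell,k}(\lambda)\lambda^{n/2}$) is a point the paper only addresses informally in the discussion following the corollary.
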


\begin{proof}
Using  \eqref{eq:e1}, \eqref{eq:funkhecke} and writing $( \,\cdot\,)^*$ for the complex conjugation we have
\begin{alignat*}{2}
\hat f_{\ell,k}(\lambda)& =\int_{\sph^{n-1}}\int_{\R^n} e^{-i\lambda\inner{\xx}{\oomega}} f(\xx) \kl{Y_{\ell,k}(\oomega)}^*\rmd\xx \rmd S(\oomega)
\\
&=\int_{\sph^{n-1}} \int\half f(\rho\ttheta)\rho^{n-1} \left(   ~ \int_{\sph^{n-1}} e^{-i\lambda\rho\inner{\ttheta}{\oomega}} \kl{Y_{\ell,k}(\oomega)}^* \rmd S(\oomega)  \right) \rmd\rho \rmd S(\ttheta)\\
&= \int_{\sph^{n-1}} \int\half f(\rho\ttheta)\rho^{n-1} \left( \int_{\sph^{n-1}} e^{i\lambda\rho\inner{\ttheta}{\oomega}} Y_{\ell,k}(\oomega) \rmd S(\oomega)\right)^* \rmd\rho \rmd S(\ttheta) \\
&=  (2\pi)^{\frac{n}{2}}\kl{i^\ell}^*\int_{\sph^{n-1}} \int\half f(\rho\ttheta)\rho^{{\frac{n}{2}}} \lambda^{\frac{2-n}{2}} J_{\ell+\frac{n-2}{2}}(\lambda\rho)\kl{Y_{\ell,k}(\oomega)}^*  \rmd\rho \rmd S(\ttheta)
\end{alignat*}
and therefore
\begin{equation} \label{eq:coeffofff}
\hat f_{\ell,k}(\lambda)
=
(2\pi)^{\frac{n}{2}} \kl{i^\ell}^*  \lambda^{\frac{2-n}{2}}\int\half f_{\ell,k}(\rho)\rho^{{\frac{n}{2}}} J_{\ell+\frac{n-2}{2}}(\lambda\rho)\rmd\rho \,.
\end{equation}
Together with Lemma \ref{lem:reloffandp} this gives
\begin{equation}\label{eq:hinv}
\int\half f_{\ell,k}(\rho)\rho^{{\frac{n}{2}}} J_{\ell+\frac{n-2}{2}}(\lambda\rho)\rmd\rho
=
\frac{2}{\pi}\,  \frac{ R^{\frac{n}{2}} \CCo\set{g_{\ell,k}}(\lambda)}{(R\ca+\cb \ell)
\lambda J_{\ell+\frac{n-2}{2}}(R\lambda)-\cb R \lambda^{2}J_{\ell+\frac{n}{2}}(R\lambda)}\,.
\end{equation}
The left hand side in \eqref{eq:hinv} is recognized as the Hankel transform of order $\ell+(n-2)/2$  of
of the function $\rho \mapsto f_{\ell,k}(\rho)\rho^{(n-2)/2}$. Hence, by applying the inverse Hankel transform, we obtain
\begin{equation*}
f_{\ell,k}(\rho)= R^{\frac{n}{2}} \rho^{-{\frac{n-2}{2}}}
\, \frac{2}{\pi}
\int\half\frac{\CCo\set{g_{\ell,k}}(\lambda)J_{\ell+\frac{n-2}{2}}(\lambda\rho)\rmd\lambda}{(R \ca+\cb \ell)J_{\ell+\frac{n-2}{2}}(R\lambda) - \cb R \lambda J_{\ell+\frac{n}{2}}(R \lambda)    }\,.
\end{equation*}
Together with \eqref{eq:e2},  this gives  the desired result.
\end{proof}

Corollary~\ref{cor:inv1} gives an exact inversion formula for reconstructing
any smooth compactly supported  function  $f \in \mathcal{C}^\infty_0 \left(\R^n \right)$
from data $\Mo_{\ca,\cb} f$.   In order to actually implement the formula one
requires a proper   discretization of the integral.
In  real situations only noisy data $g^{\delta} \in L^2\left(\Sr \times (0, \infty) \right)$ are available and the  cosine transform $\CCo \{ g^{\delta}_{\ell,k}  \}(\lambda)$ will  not vanish at the roots  of the denominator $ (\ca+\cb \ell)J_{\ell+\frac{n-2}{2}}(\lambda)-\cb\lambda J_{\ell+n/2}(\lambda)$ that appears in formula \eqref{eq:inv1}. This means
that Corollary~\ref{cor:inv1} behaves unstable close to the roots  and cannot
 be directly used to reconstruct the initial pressure density $f$.
  In this following section, we avoid the  zeros of the denominator by  using a
  Fourier Bessel series expansion similar as in \cite{haltmeier2007thermoacoustic,zangerl2009exact}.

\section{Stable series inversion formulas}
\label{sec:inv}

{\rot  In practice, reconstructing the initial pressure by the inversion formula stated in Corollary~\ref{cor:inv1} is unstable due to the zeros of $ (\ca+\cb \ell)J_{\ell+\frac{n-2}{2}}(\lambda)-\cb\lambda J_{\ell+n/2}(\lambda)$
in the denominator of   \eqref{eq:inv1}.
To avoid this issue, in this section we derive a series inversion formula
using a Fourier Bessel series,
similar to  \cite{haltmeier2007thermoacoustic,zangerl2009exact}.
We derive  different  inversion formulas for the cases $\cb \neq 0$
and  $\cb=0$, respectively.}
For the following recall that   $w_{j,\ell}$ denotes the $j$-th positive zero of
$J_{\ell+(n-2)/2}$.

The following theorem is the main result of this paper.

\begin{theorem}[Explicit series inversion formula]\label{thm:inv}
Let $f\in \mathcal{C}_0^\infty(\Br)$,
let $\ca, \cb \in \R$, and consider the
measurement data $g := \Mo_{\ca,\cb} f$  as in \eqref{eq:data}.
\begin{enumerate}[label=(\alph*)]
\item  If $\cb \neq 0$, then
\begin{multline}\label{eq:invA}
f(\xx)=   \bkl{ \frac{R}{|\xx|} }^{{\frac{n-2}{2}}}\frac{4}{c_2 \pi}
 \sum_{\ell=0}^\infty
\sum^{N(n,\ell)}_{k=0}
\\
\bkl{ \sum^\infty_{j=1}
  \frac{\CCo\set{g_{\ell,k}}\left(\frac{w_{j,\ell}}{R}\right)}{  w_{j,\ell}^2  J_{\ell+\frac{n}{2}}(w_{j,\ell})^3 }
J_{\ell+\frac{n-2}{2}}\left(\frac{w_{j,\ell}|\xx|}{R}\right)}Y_{\ell,k}\left(\frac{\xx}{|\xx|}\right) \,.
\end{multline}

\item
{\rot If $\cb = 0$, then
\begin{multline}\label{eq:invB}
f(\xx)=  \bkl{ \frac{R}{|\xx|} }^{{\frac{n-2}{2}}}\frac{4}{ \ca R^2 \pi}
 \sum_{\ell=0}^\infty\sum^{N(n,\ell)}_{k=0}
\\
\bkl{  \sum^\infty_{j=1} \frac{\So\left\{ t g_{\ell,k}(t) \right\}\left(\frac{w_{j,\ell}}{R}\right)}{  w_{j,\ell}  J_{\ell+\frac{n}{2}}(w_{j,\ell})^3 }
J_{\ell+\frac{n-2}{2}}\left(\frac{w_{j,\ell}|\xx|}{R}\right)}Y_{\ell,k}\left(\frac{\xx}{|\xx|}\right) \,.
\end{multline} }
\end{enumerate}
\end{theorem}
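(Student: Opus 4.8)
The plan is to start from the key identity \eqref{eq:cplk} of Lemma~\ref{lem:reloffandp} and turn it into a Fourier--Bessel series expansion in the radial variable, exactly as in \cite{haltmeier2007thermoacoustic,zangerl2009exact}. Since $f\in\mathcal{C}_0^\infty(\Br)$, each radial coefficient $f_{\ell,k}$ is supported in $[0,R]$, so the function $\rho\mapsto \rho^{(n-2)/2} f_{\ell,k}(\rho)$ admits a Fourier--Bessel expansion on $[0,R]$ with respect to the orthogonal system $\{J_{\ell+(n-2)/2}(w_{j,\ell}\rho/R)\}_{j\ge1}$. The expansion coefficients are, up to the standard normalization $\tfrac{2}{R^2 J_{\ell+n/2}(w_{j,\ell})^2}$, the integrals $\int_0^R \rho^{n/2} f_{\ell,k}(\rho) J_{\ell+(n-2)/2}(w_{j,\ell}\rho/R)\,\rmd\rho$, which by \eqref{eq:hinv} (equivalently, by the Hankel transform representation in the proof of Corollary~\ref{cor:inv1}) equal $\tfrac{2}{\pi}\cdot R^{n/2}\CCo\{g_{\ell,k}\}(w_{j,\ell}/R)$ divided by the denominator $(R\ca+\cb\ell)\lambda J_{\ell+(n-2)/2}(R\lambda) - \cb R\lambda^2 J_{\ell+n/2}(R\lambda)$ evaluated at $\lambda=w_{j,\ell}/R$. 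The whole point of evaluating at the Bessel roots is that $J_{\ell+(n-2)/2}(w_{j,\ell})=0$, so the first term in the denominator vanishes and no division by a zero of an unknown quantity occurs.

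For part (a), $\cb\neq0$: at $\lambda = w_{j,\ell}/R$ the denominator collapses to $-\cb R\lambda^2 J_{\ell+n/2}(R\lambda) = -\cb (w_{j,\ell}^2/R) J_{\ell+n/2}(w_{j,\ell})$, so the Fourier--Bessel coefficient of $\rho^{(n-2)/2}f_{\ell,k}$ becomes
\begin{equation*}
\frac{2}{R^2 J_{\ell+n/2}(w_{j,\ell})^2}\cdot \frac{2}{\pi}\cdot \frac{R^{n/2}\CCo\{g_{\ell,k}\}(w_{j,\ell}/R)}{-\cb (w_{j,\ell}^2/R) J_{\ell+n/2}(w_{j,\ell})}\,,
\end{equation*}
and collecting constants gives the factor $\tfrac{4}{\cb\pi}\cdot R^{(n-2)/2}/(w_{j,\ell}^2 J_{\ell+n/2}(w_{j,\ell})^3)$ (a sign is absorbed because the derivative identity produces $-\cb$; I will track this sign carefully). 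Multiplying back by $\rho^{-(n-2)/2}$, reinserting the spherical harmonics sum via \eqref{eq:e2}, and writing $\rho=|\xx|$, $\ttheta=\xx/|\xx|$ yields \eqref{eq:invA}.

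For part (b), $\cb=0$: now \eqref{eq:cplk} reads $\CCo\{g_{\ell,k}\}(\lambda) = (2R\pi)^{-n/2}\pi 2^{-1} i^\ell R\,\hat f_{\ell,k}(\lambda)\lambda^{n/2} J_{\ell+(n-2)/2}(R\lambda)$, which vanishes at $\lambda=w_{j,\ell}/R$ by Corollary~\ref{cor:range} — so we cannot just evaluate $\CCo\{g_{\ell,k}\}$ at the roots. The fix, as in \cite{haltmeier2007thermoacoustic,zangerl2009exact}, is to use the sine transform of $t\mapsto t\,g_{\ell,k}(t)$ instead: since $\So\{t\,g(t)\}(\lambda) = -\tfrac{\rmd}{\rmd\lambda}\CCo\{g\}(\lambda)$, differentiating the $\cb=0$ version of \eqref{eq:cplk} in $\lambda$ and then setting $\lambda=w_{j,\ell}/R$ kills every term except the one in which the derivative hits $J_{\ell+(n-2)/2}(R\lambda)$, producing $R J_{\ell+(n-2)/2}'(w_{j,\ell}) = -R J_{\ell+n/2}(w_{j,\ell})$ (using $J_\nu'(x) = -J_{\nu+1}(x)$ at a zero of $J_\nu$). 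This gives a clean expression for $\hat f_{\ell,k}(w_{j,\ell}/R)$ in terms of $\So\{t g_{\ell,k}(t)\}(w_{j,\ell}/R)$, which after combining with \eqref{eq:coeffofff} and again recognizing the Fourier--Bessel coefficients of $\rho^{(n-2)/2}f_{\ell,k}$ on $[0,R]$ yields \eqref{eq:invB} with constant $\tfrac{4}{\ca R^2\pi}$ and the denominator $w_{j,\ell} J_{\ell+n/2}(w_{j,\ell})^3$.

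The main obstacle is not any single computation but the bookkeeping: getting every power of $R$, $\lambda$, $w_{j,\ell}$, the Fourier--Bessel normalization constant $\tfrac{2}{R^2 J_{\ell+n/2}(w_{j,\ell})^2}$, the factor $\tfrac{2}{\pi}$ from inverting the cosine transform, and the overall sign (from $J_\nu'=-J_{\nu+1}$ and from $-\cb$) to line up so that the constants $\tfrac{4}{\cb\pi}$ and $\tfrac{4}{\ca R^2\pi}$ emerge exactly. A secondary point requiring care is the justification of termwise manipulations: interchanging the Fourier--Bessel sum with the integrals, and the convergence of the resulting series in $L^2(\Br)$ (and pointwise, given $f\in\mathcal{C}_0^\infty$) — this follows from the smoothness and compact support of $f$ together with standard Fourier--Bessel convergence theory, so I would state it and refer to \cite{haltmeier2007thermoacoustic,zangerl2009exact} rather than reprove it.
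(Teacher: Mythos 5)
Your proposal follows the paper's own proof essentially step for step: the same Fourier--Bessel expansion of $\rho^{(n-2)/2}f_{\ell,k}$ on $[0,R]$ with normalization $2/(R^2 J_{\ell+n/2}(w_{j,\ell})^2)$, the same use of Lemma~\ref{lem:reloffandp} (equivalently \eqref{eq:hinv}) evaluated at the Bessel zeros so that for $\cb\neq 0$ the denominator collapses to the $J_{\ell+n/2}$ term, and for $\cb=0$ the same passage to $\So\{t g_{\ell,k}(t)\}$ via the range condition. The only cosmetic difference is in part (b), where you differentiate the identity and evaluate at the root while the paper applies L'Hospital's rule to the resulting indeterminate form --- these are the identical computation.
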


\begin{proof}
Because   $f_{\ell,k}$
 is of class $C^\infty$ and is compactly supported, we can expand $f_{\ell,k}(\rho)\rho^{{\frac{n-2}{2}}}$  into a Fourier Bessel series \cite{folland2009fourier},\rot
\begin{multline}\label{eq:flkfromFflk1}
f_{\ell,k}(\rho)\rho^{{\frac{n-2}{2}}}
=\sum^\infty_{j=1}\frac{2 \int\half f_{\ell,k}(r)
J_{\ell+\frac{n-2}{2}}\left(\frac{w_{j,\ell}r}{R}\right) r^{{\frac{n}{2}}}\,\mathrm{d}r }{R^2J_{\ell+\frac{n}{2}}(w_{j,\ell})^2} J_{\ell+\frac{n-2}{2}}\left(\frac{w_{j,\ell}\rho}{R}\right)\\
\\ =(2\pi)^{-{\frac{n}{2}}} (i^{-\ell})^*\sum^\infty_{j=1}\frac{2R^{-\frac{n+2}2}}{J_{\ell+\frac{n}{2}}(w_{j,\ell})^2}\hat f_{\ell,k}\left(\frac{w_{j,\ell}}{R}\right)w_{j,\ell}^\frac{n-2}{2} \;
J_{\ell+\frac{n-2}{2}}\left(\frac{w_{j,\ell}\rho}{R}\right) \,,
\end{multline}
where for the second line, we used (\ref{eq:coeffofff}).
From Lemma \ref{lem:reloffandp}, we have
\begin{multline}\label{eq:ww}
\CCo\set{g_{\ell,k}}\left(\frac{w}{R}\right)
=(2R\pi)^{-{\frac{n}{2}}}\pi2^{-1} i^\ell\hat f_{\ell,k}\left(\frac{w}{R}\right)
R^{-{\frac{n}{2}}}w^{\frac{n}{2}} \\ \times
\left[(R\ca+\cb \ell)J_{\ell+\frac{n-2}{2}} ( w) -
\cb w J_{\ell+\frac{n}{2}}(w)\right] \,.
\end{multline}

\begin{itemize}[wide]
\item
For  $\cb \neq 0$ we have $(R\ca+\cb \ell)J_{\ell+\frac{n-2}{2}} ( w_{j,\ell}) -
\cb w_{j,\ell} J_{\ell+\frac{n}{2}}(w_{j,\ell}) = -
\cb w_{j,\ell} J_{\ell+\frac{n}{2}}(w_{j,\ell}) \neq 0$. We can therefore evaluate
\eqref{eq:ww} at $w = w_{j,\ell}$ and solve for
$\hat f_{\ell,k}\left(\frac{w}{R}\right)$. Together with
\eqref{eq:flkfromFflk1}, we obtain
\begin{equation} \label{eq:proof1}
f_{\ell,k}(\rho)\rho^{{\frac{n-2}{2}}}
 = R^{\frac{n-2}{2}}
 \frac{4 }{c_2  \pi}
\sum^\infty_{j=1}\frac{\CCo\set{g_{\ell,k}}\left(\frac{w_{j,\ell}}{R}\right)
J_{\ell+\frac{n-2}{2}}\left(\frac{w_{j,\ell}\rho}{R}\right)}{ J_{\ell+\frac{n}{2}}(w_{j,\ell})^2
w_{j,\ell}^2   J_{\ell+\frac{n}{2}}(w_{j,\ell}) } \,.
\end{equation}

\item
For the case $\cb =0$, equation  \eqref{eq:ww}
gives
$$
\hat f_{\ell,k}\left(\frac{w}{R}\right)=
\frac{\CCo\set{g_{\ell,k}}\left(\frac{w}{R}\right)}{w^{\frac{n}{2}} J_{\ell+\frac{n-2}{2}} ( w)}
\frac{2 (2R\pi)^{{\frac{n}{2}}}  R^{{\frac{n-2}{2}}} }{\ca \pi i^\ell}
\quad \text{ for } w \neq w_{j,\ell}    \,.
$$
For the zeros $w = w_{j,\ell}$ of $J_{\ell+\frac{n-2}{2}} ( w)$
this   is an indeterminate form, which  can be evaluated with L'Hospital's rule.
We have
\begin{multline*}
\partial_w \Bigl( w^{\frac{n}{2}}J_{\ell+\frac{n-2}{2}}(w) \Bigr)
=
\partial_w \Bigl(\lambda^{\ell+n-1}w^{-(\ell+\frac {n-2}2)} J_{\ell+\frac{n-2}{2}}(w) \Bigr)
\\ =(\ell+n-1)w^\frac{n-2}{2} J_{\ell+\frac{n-2}{2}}(w)-w^{\frac{n}{2}}J_{\ell+\frac{n}{2}}(w) \,.
\end{multline*}
Using this and the identity $\partial_w \CCo\set{g_{\ell,k}}(w R^{-1}) = -R^{-1}
\So\set{t g_{\ell,k}(t)}(w R^{-1})$, and applying L'Hospital's rule give
\begin{equation*}
\hat f_{\ell,k}\left(\frac{w_{j,\ell}}{R}\right)=
\frac{2 (2R\pi)^{{\frac{n}{2}}}  R^{{\frac{n-2}{2}}} }{\ca \pi i^\ell}
\,
\frac{\So\{ t g_{\ell,k}(t)\}(\frac{w_{j,\ell}}{R})}{R w_{j,\ell}^{\frac{n}2}J_{\ell+\frac{n}{2}}(w_{j,\ell})} \,.
\end{equation*}
Therefore equation  \eqref{eq:flkfromFflk1}  yields
\begin{equation} \label{eq:proof2}
f_{\ell,k}(\rho)\rho^{{\frac{n-2}{2}}}
 =  R^{\frac{n-2}{2}}
 \frac{4 }
{ \ca R^2 \pi}
\sum^\infty_{j=1}\frac{\So\left\{t g_{\ell,k}(t) \right\}\left(\frac{w_{j,\ell}}{R}\right)}
{ w_{j,\ell}   J_{\ell+\frac{n}{2}}(w_{j,\ell})^2 }
J_{\ell+\frac{n-2}{2}}\left(\frac{w_{j,\ell}\rho}{R}\right) \,.
\end{equation}
\end{itemize}
By combing   \eqref{eq:proof1} and  \eqref{eq:proof2}
with the spherical harmonics expansion \eqref{eq:e2},
 we obtain the desired inversion formulas \eqref{eq:invA} and \eqref{eq:invB}.
\end{proof}

{\rot
Note that  the inversion formula \eqref{eq:invA}  for $\Mo_{\ca, \cb}$
might be  slightly surprising  because it holds for any $\cb \neq 0$   regardless of the value of $\ca$.   However, this  independence  is an immediate consequence of the decomposition
$\Mo_{\ca, \cb} f = \ca \Mo_{1, 0} +  \cb \Mo_{0, 1}$, the range condition in Proposition~\ref{cor:range} for the operator $\Mo_{ 1,0}$, and  the particular
form of the right hand side in \eqref{eq:invA}.}
Equation~\eqref{eq:invB}, on the other hand,   is a  new series expansion formula
for the standard PAT forward operator  $\Mo_{1, 0}$.
In the special case $n=2$, it becomes  the  series inversion formula
\cite[Theorem 3.1]{haltmeier2007thermoacoustic}.

\section{Numerical experiments}
\label{sec:num}

In this section, we present reconstruction results with the inversion formulas in
Theorem~\ref{thm:inv} using data $g = \Mo_{\ca,\cb} f$ for different combinations of $\ca, \cb$.
We consider the case of $n=2$ spatial dimensions and use $R=1$.
The 2D case arises in applications where the acoustic pressure is measured by integrating line detectors~\cite{burgholzer2007temporal,paltauf2017piezoelectric}.

\begin{figure}[htb!]
	\centering
	\includegraphics[width=0.48\textwidth]{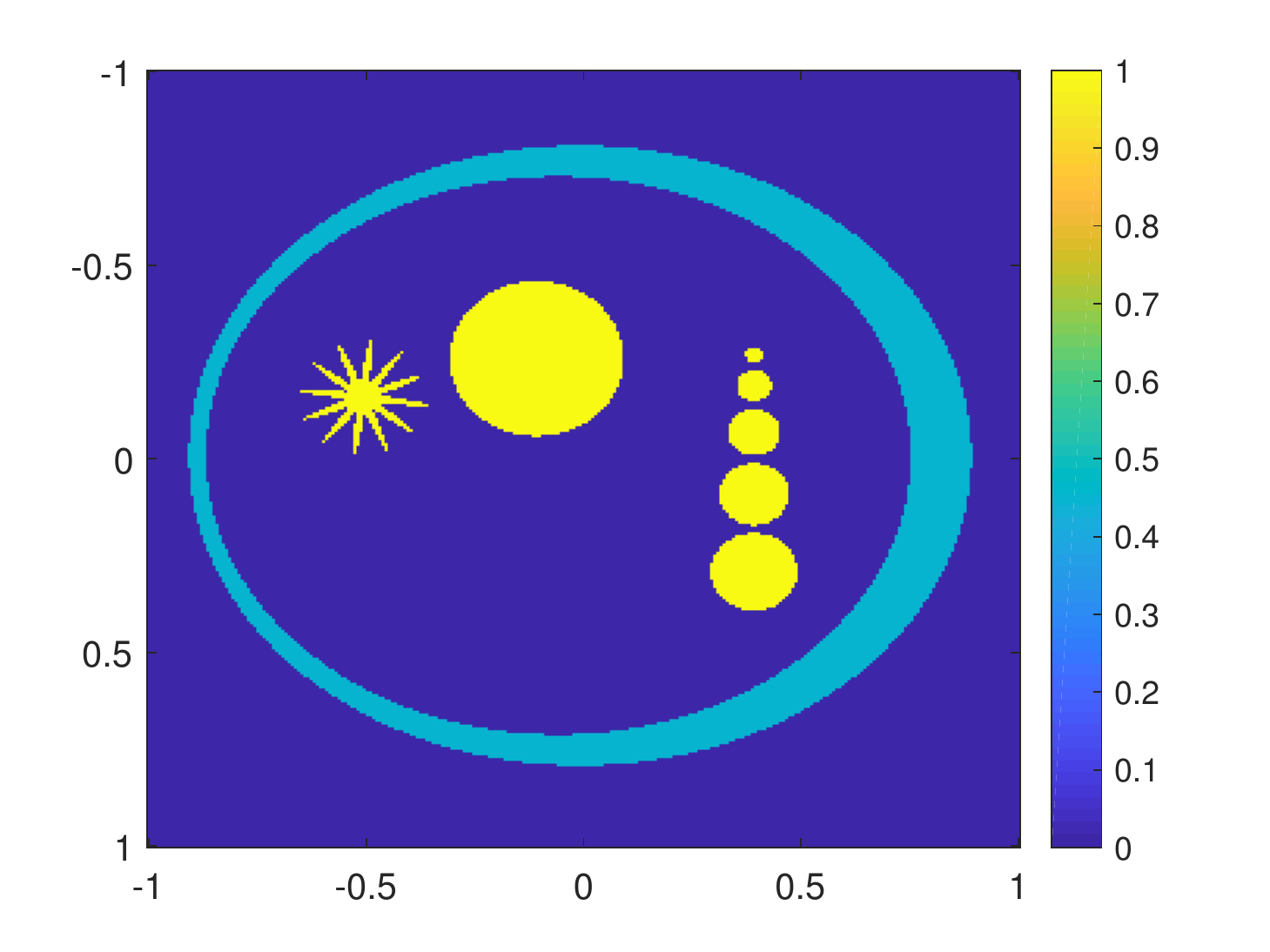}
	\includegraphics[width=0.48\textwidth]{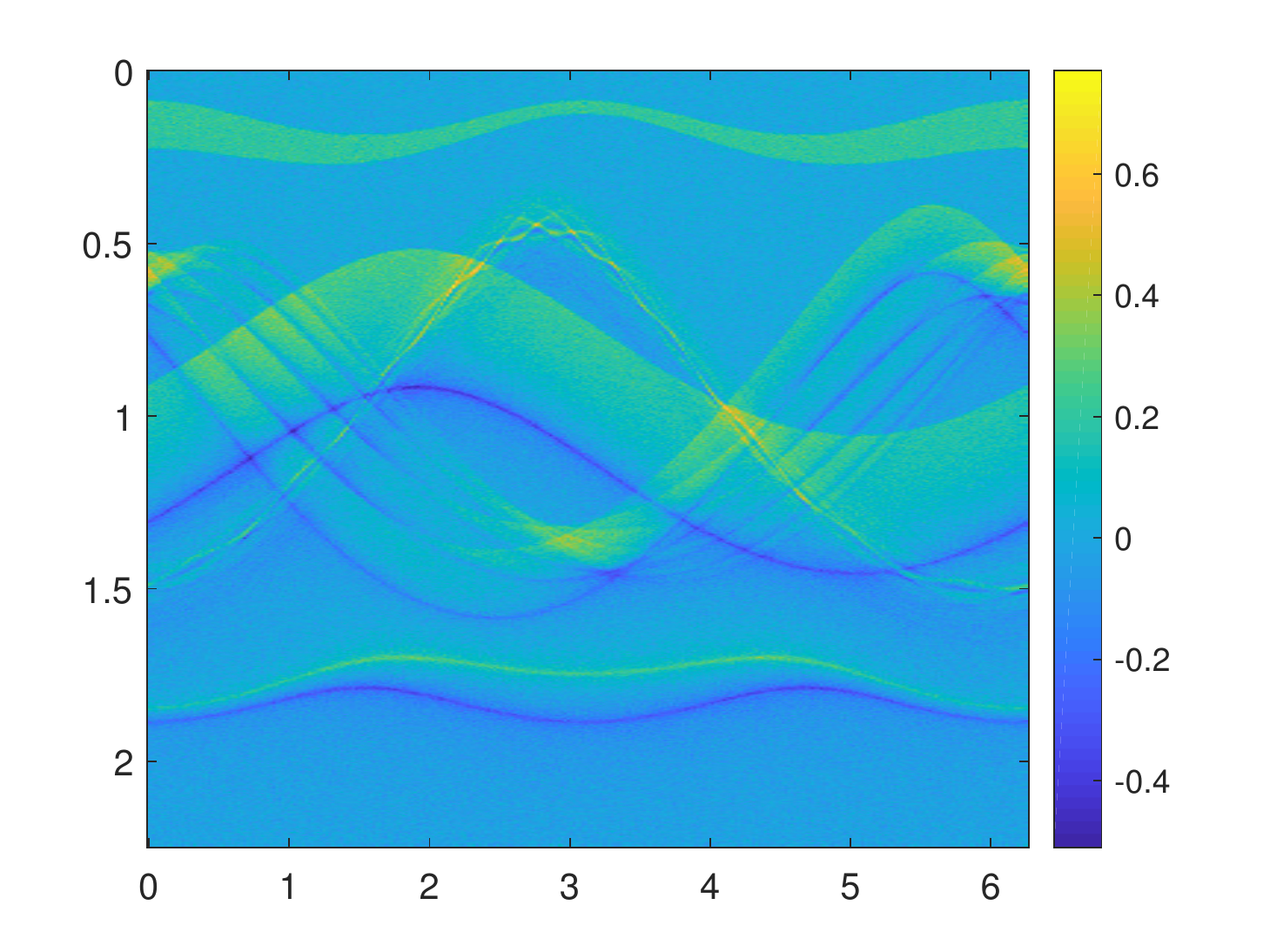}\\
	\includegraphics[width=0.48\textwidth]{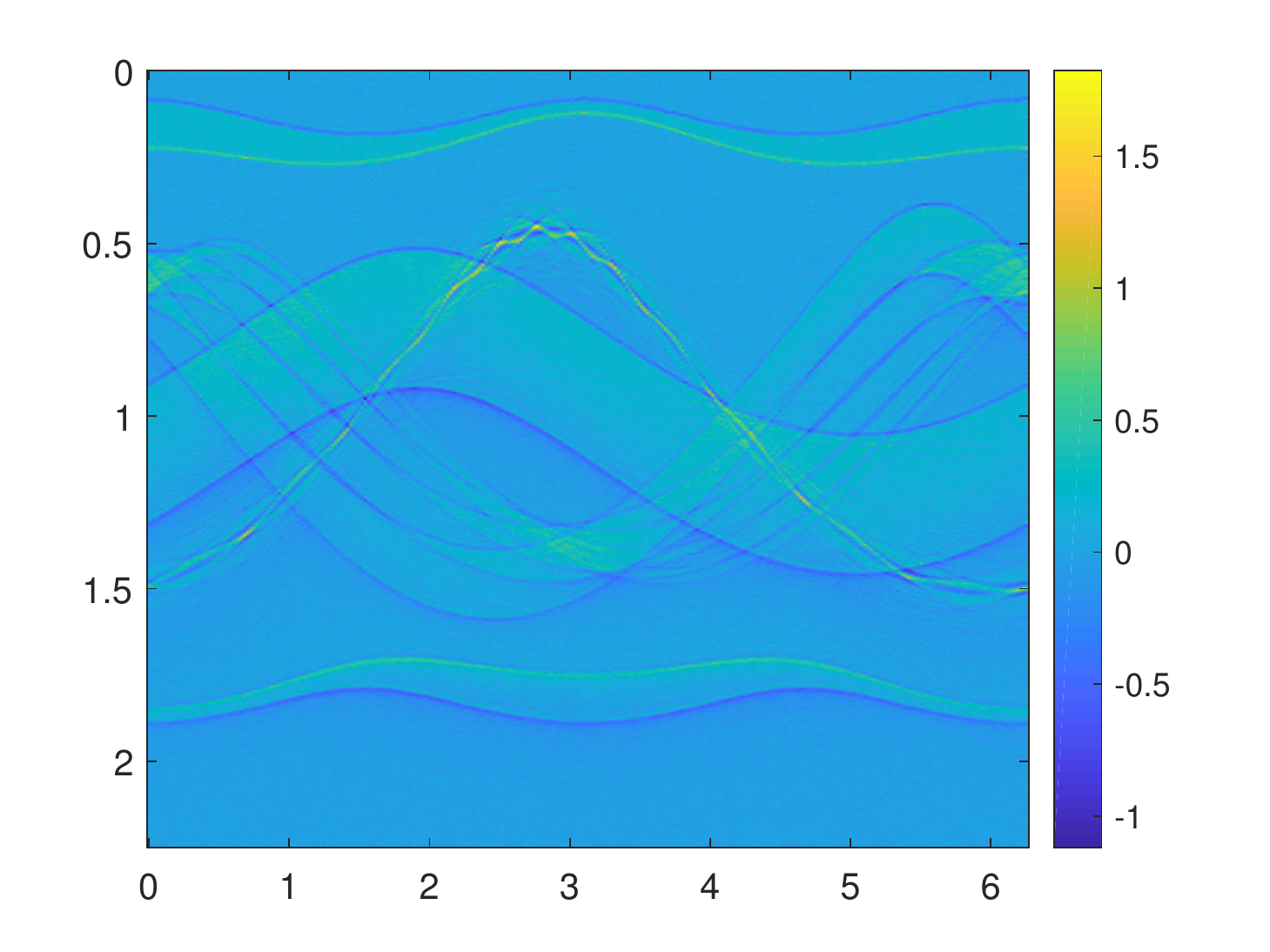}
	\includegraphics[width=0.48\textwidth]{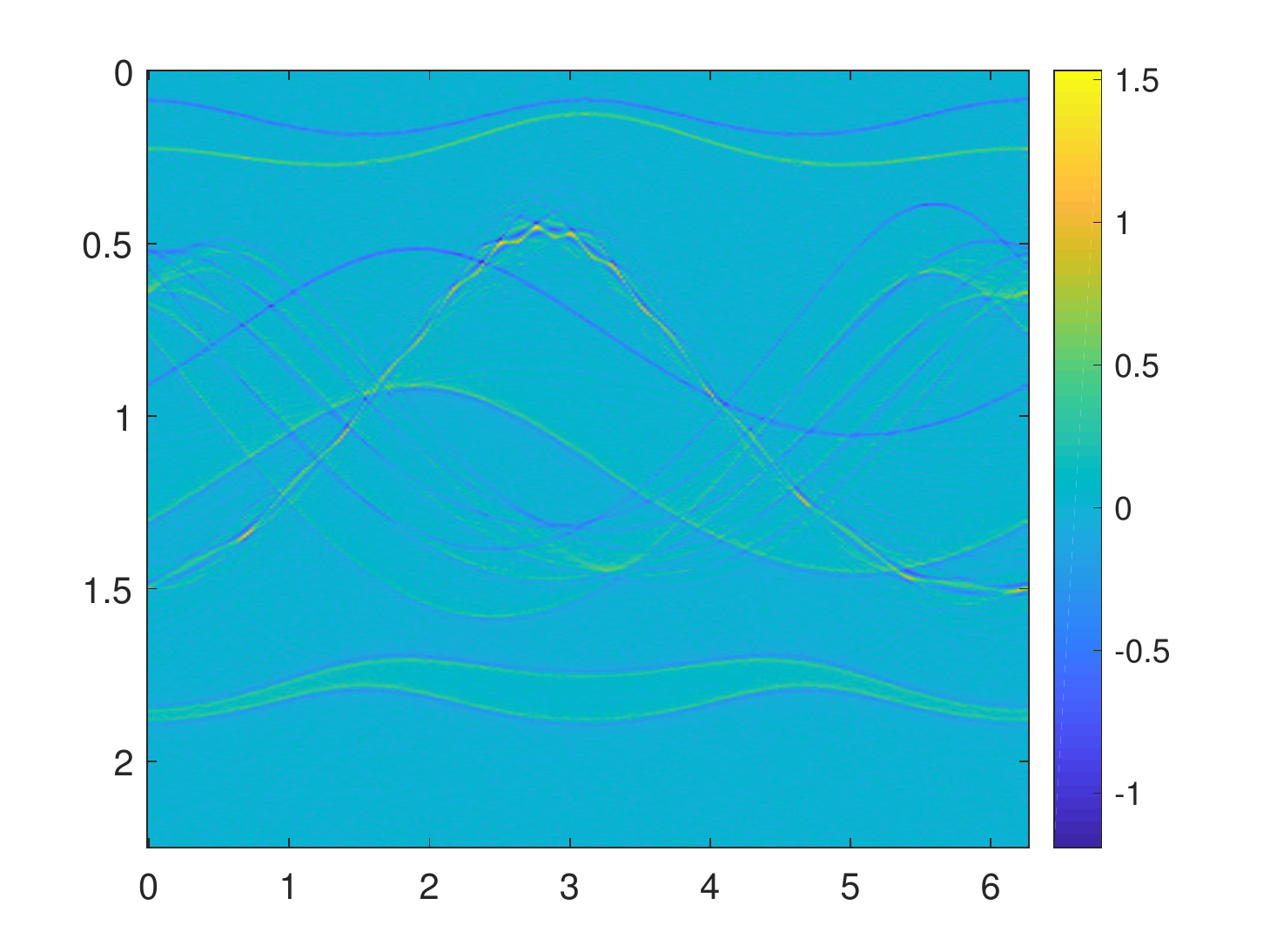}
	\caption{\textsc{Phantom and noisy measurement data.} Top left: Initial pressure $f$ used for the presented numerical results. Top right: Classical PAT data $\Mo_{1,0} f +\xi_{1,0}$. Bottom left: Mixed data $\Mo_{1,1} f +\xi_{1,1}$. Bottom right:
Normal derivative $\Mo_{0,1} f +\xi_{0,1}$.
In all cases Gaussian white noise $\xi_{\ca,\cb}$  with a standard deviation of $\SI{50}{\percent}$ of the $\ell^2$-norm of $\Mo_{\ca,\cb} f$ has been added to the data.}\label{fig:data}
\end{figure}

\subsection{Discretization and data simulation}

In order to compute the pressure field and its gradient restricted to the unit circle,
we use a discrete wave propagation model on a 2D equidistant grid.
To numerically compute the solution at the discretization points we
use the k-space method \cite{cox2007k,mast2001k} implemented as described in  \cite{haltmeier2017analysis}.  Using the k-space method, we obtain the
values of the acoustic pressure field and compute its gradient by symmetric differences.
The pressure and its normal derivative on the circle are obtained by linear interpolation.
For the presented numerical results,  the initial pressure \eqref{fig:data} is given
on  $280 \times 280$ Cartesian grid points in $[-1,1]^2$ and the data $\Mo_{\ca,\cb} f$
 are simulated for $\Nangle = 300$ equidistant sensor locations on unit circle.
 At each sensor we use $\Ntime = 1600$ equidistant time samples in the measurement interval $[0, 6]$.

The  top left image in Figure \ref{fig:data} shows the initial pressure distribution $f$ used our simulations. The top right image shows the classical PAT data $\Mo_{1,0} f+\xi_{1,0}$,
and the bottom right image shows the normal derivative  $\Mo_{0,1} f+\xi_{0,1}$, which has been re-scaled such that $\Mo_{1,0} f$ and $\Mo_{0,1} f$ have the same $\ell^2$-norm. The bottom left image shows the mixed  data $\Mo_{1,1} f +\xi_{1,1}$.
In all cases we have added Gaussian white noise $\xi_{\ca,\cb}$ with a standard deviation of $\SI{50}{\percent}$ of the $\ell^2$-norm $\norm{\Mo_{\ca,\cb} f}_2 :=  (\sum_{\mf, \nf} \abs{\Mo_{\ca,\cb} f[\mf, \nf]}^2 / (\Nangle\Ntime))^{1/2}$, resulting in a relative $\ell^2$-data error $\norm{\xi_{\ca,\cb}}_2/\norm{\Mo_{\ca,\cb} f}_2$ of  $\SI{45}{ \percent}$ in all cases.
Note that we simulated the data until $T = 6$ but only show data until $t=2.2$ in
Figure~\ref{fig:data}, because after $T=2$ the data $\Mo_{\ca,\cb} f$ are smooth
and monotonically decreasing.

\subsection{Implementation of the inversion formulas}

In order to reconstruct the initial pressure distribution, we implement discrete versions of the reconstruction formulas \eqref{eq:invA} and \eqref{eq:invB},
which for $n=2$ and $R  =c_1 = c_2 = 1$ are given by
\begin{align}\label{eq:invA2D}
f(\xx)  &= \frac{4}{\pi \sqrt{2\pi} } \sum_{ k \in \mathbb{Z}  }
\bkl{\sum_{j = 1}^\infty        \frac{   J_{k}
	\left(  \omega_{j,k}  \rho \right) }{ \omega^2_{j,k } J_{k+1}\left(  \omega_{j,k}     \right)^3 }   \CCo \left\{  g_k \right\}\left( \omega_{j,k} \right) }  e^{ik \varphi}
	\\ \label{eq:invB2D}
f(\xx)  &=  \frac{4}{\pi\sqrt{2\pi}} \sum_{ k \in \mathbb{Z}  }  \bkl{ \sum_{j = 1}^\infty  \frac{   J_{k}
	\left(  \omega_{j,k}  \rho \right) }{\omega_{j,k}  J_{k+1}\left(  \omega_{j,k}     \right)^3 }
	    \So \left\{ t  g_k \right\}\left( \omega_{j,k} \right) }
	e^{ik \varphi} \,.
\end{align}
Here $g_k(t)  = \frac{1}{\sqrt{2\pi} } \int_{0}^{2\pi} g(\theta(\varphi) , t) e^{-ik\varphi} \rmd \varphi$
are the Fourier coefficients in the angular variable, and
$ \xx = \rho \theta(\varphi) $ with
$\theta(\varphi) =(\cos \varphi, \sin \varphi)$ and $\varphi \in [0, 2\pi)$.
Moreover, $\omega_{j,k}$ are the positive roots of $J_k$.
Formulas \eqref{eq:invA2D} and \eqref{eq:invB2D}
are implemented  following  \cite{haltmeier2009frequency}
where the inversion formula \eqref{eq:invB2D} for standard PAT
data are considered.

\begin{figure}[htb!]
	\centering
	\includegraphics[width=0.48\textwidth]{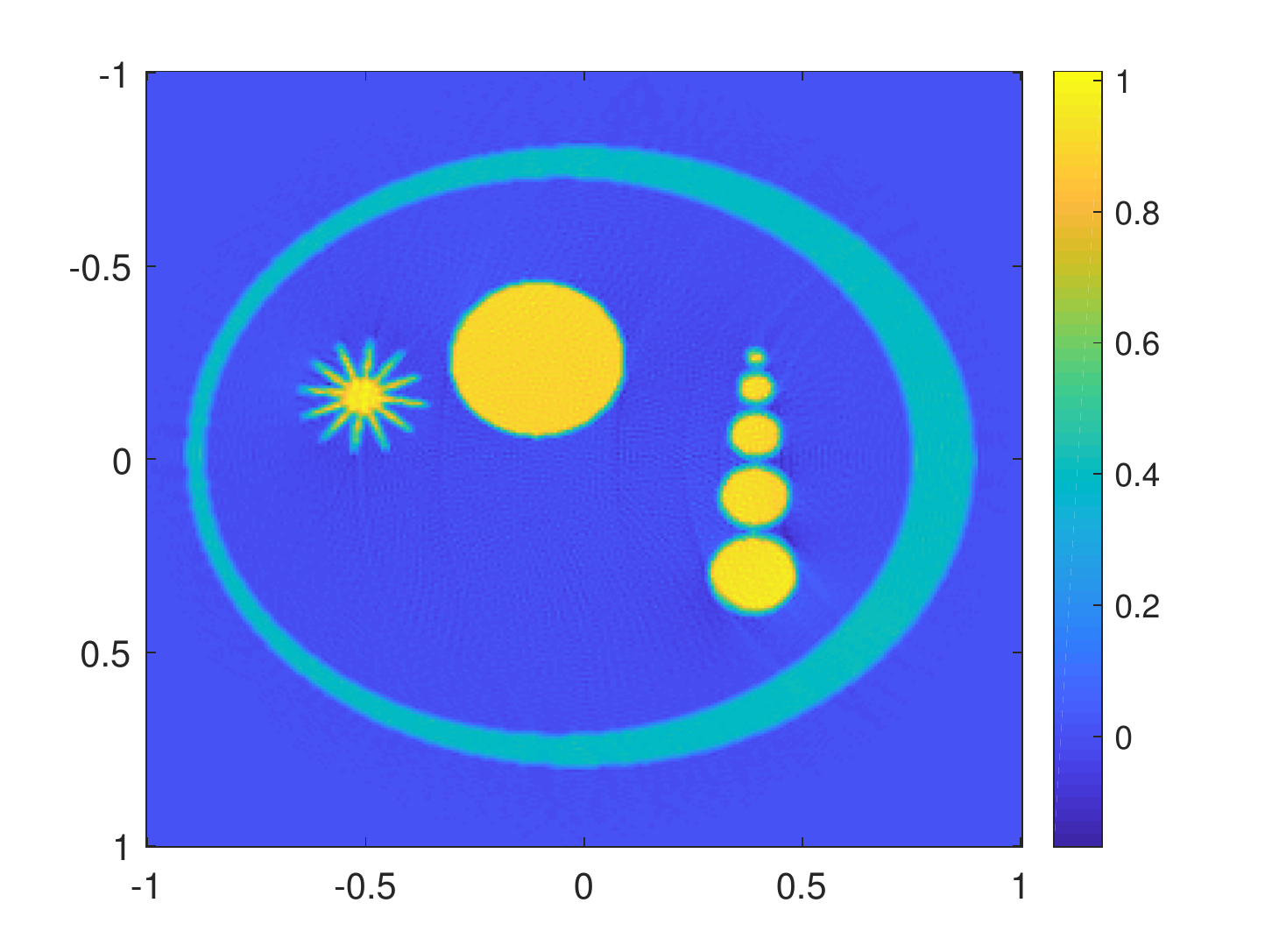}
	\includegraphics[width=0.48\textwidth]{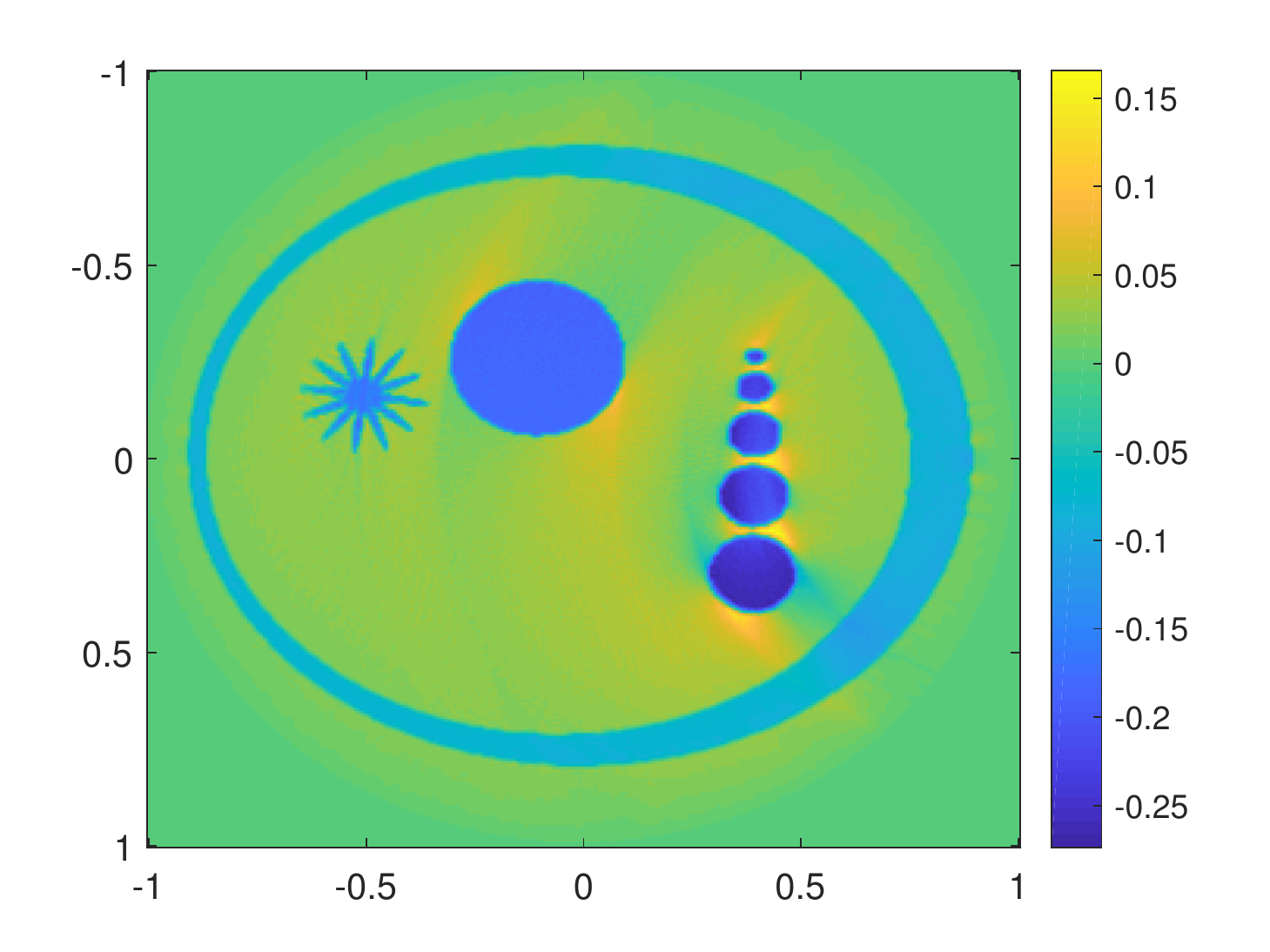}\\
	\includegraphics[width=0.48\textwidth]{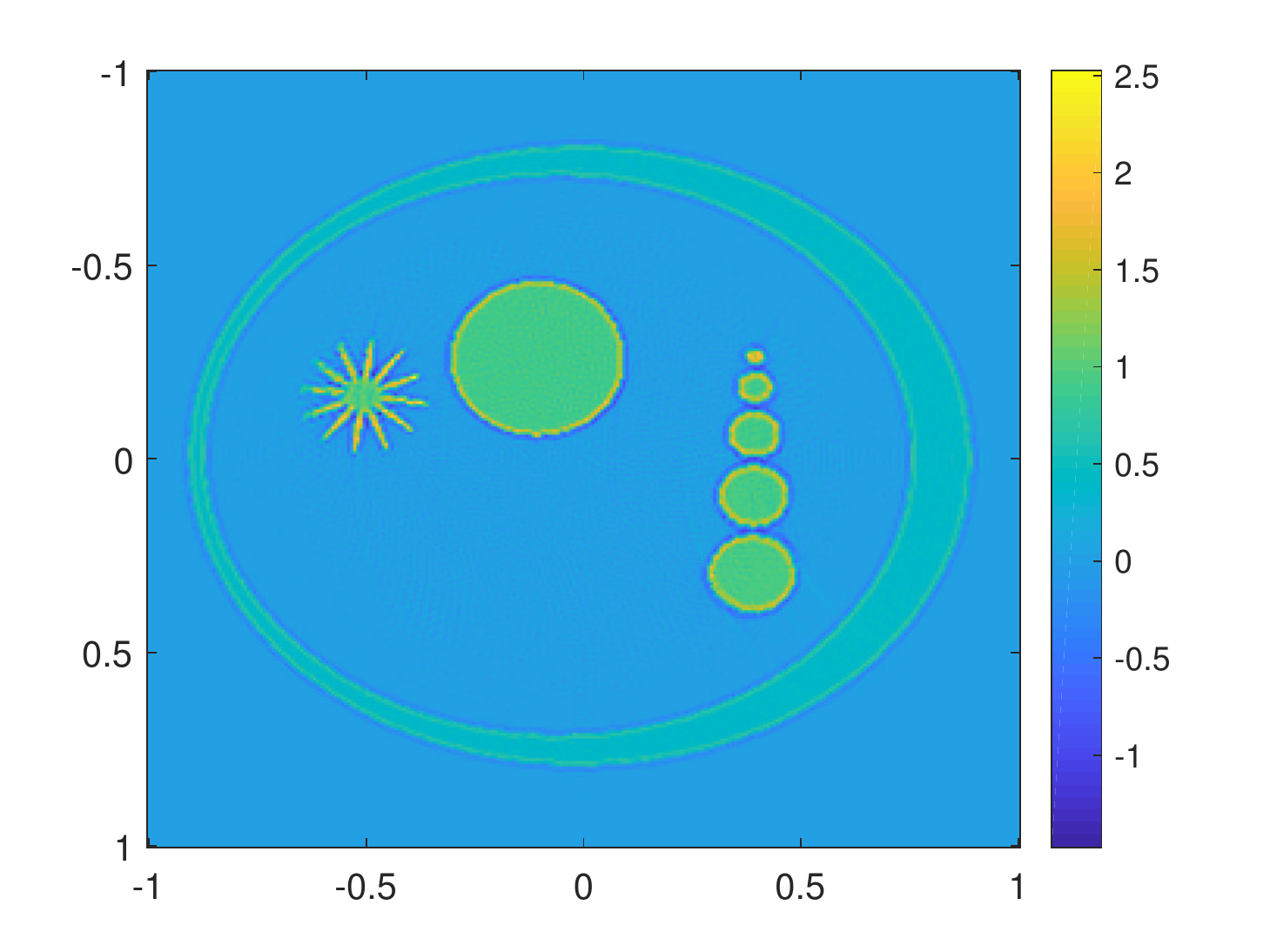}
	\includegraphics[width=0.48\textwidth]{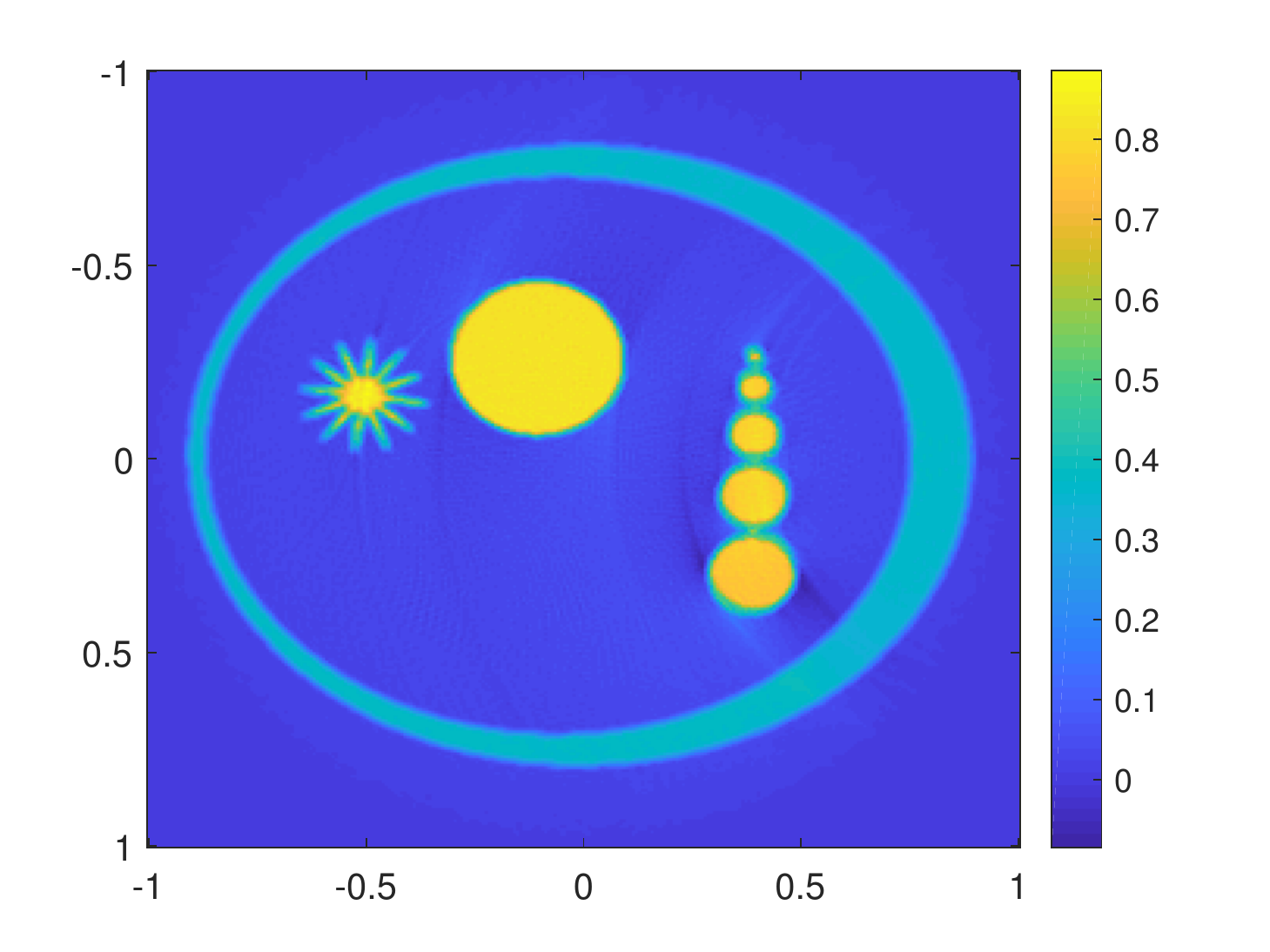}\\
	\includegraphics[width=0.48\textwidth]{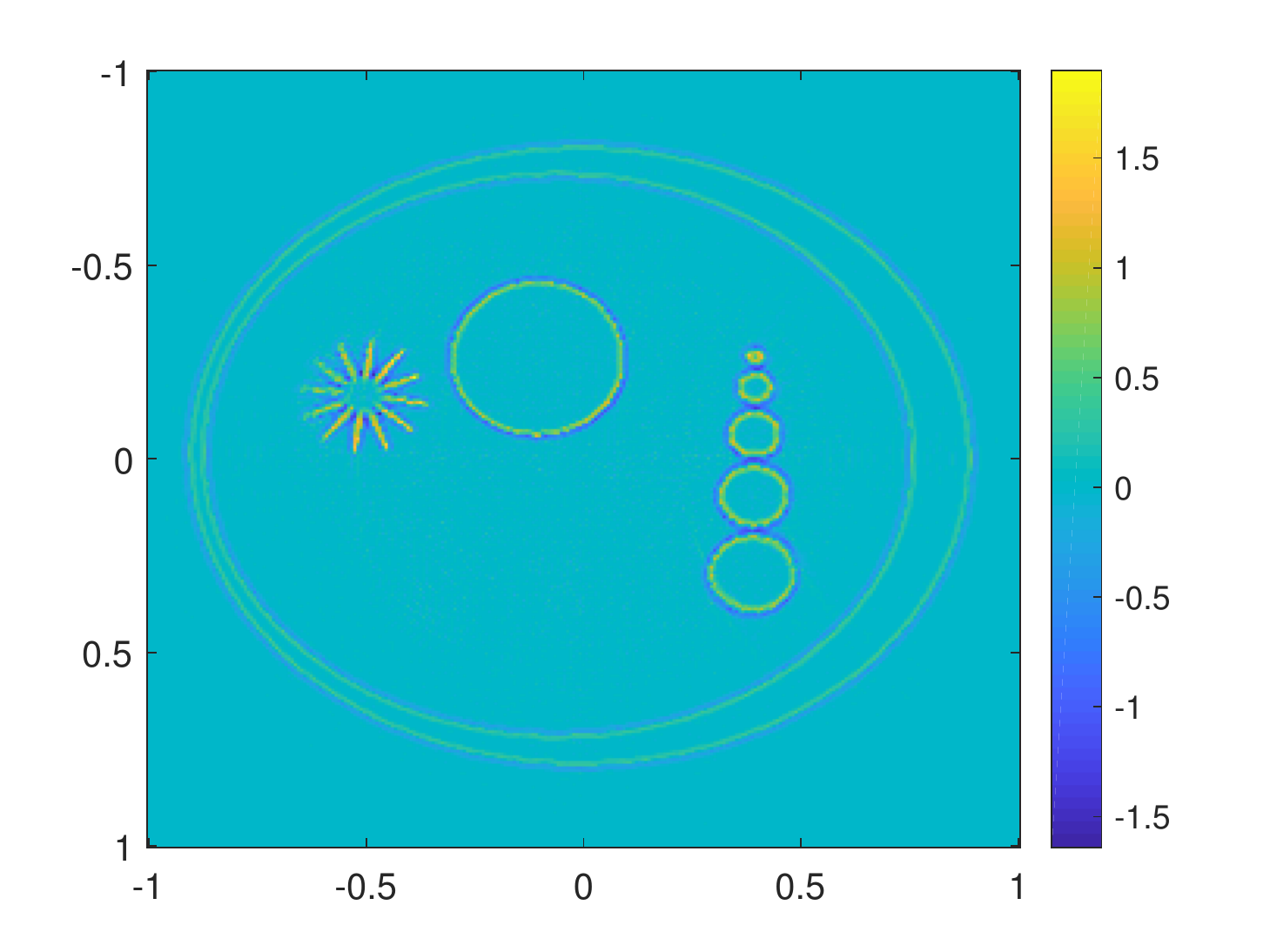}
	\includegraphics[width=0.48\textwidth]{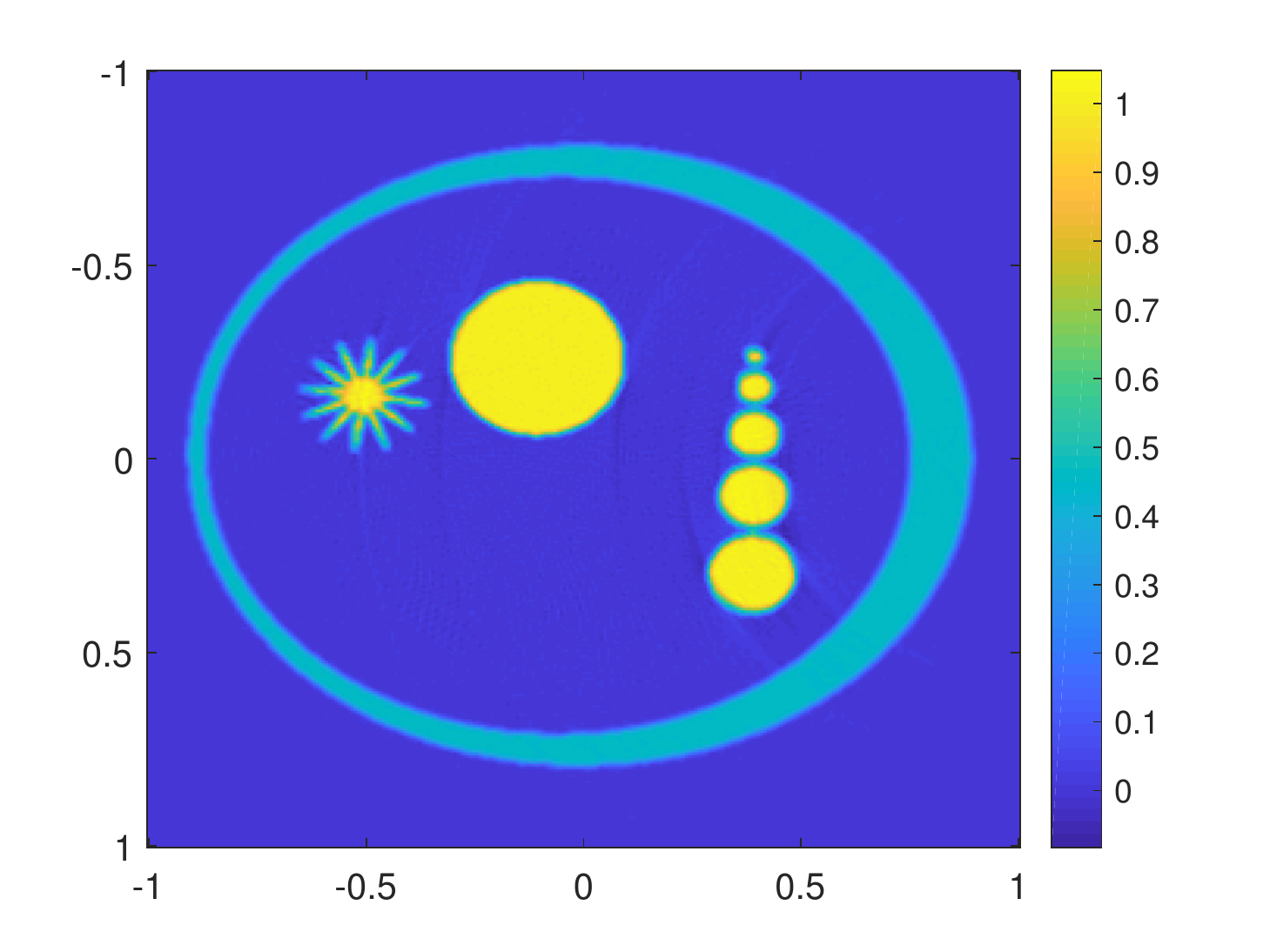}
	\caption{\textsc{Exact data reconstruction:}
The  left column corresponds to reconstructions with formula
	\eqref{eq:invB} (exact for $\cb= 0$)  and the  right column to    reconstructions with formula
	\eqref{eq:invA} (exact for $\cb\neq  0$).
	Top: Standard PAT  data $\Mo_{1,0} f$. Middle: Mixed data $\Mo_{1,1} f$.
	Bottom: Normal derivative  $\Mo_{0,1} f$. Only for the  top left, middle right and
	bottom right images the applied inversion formula matches the data.
	}\label{fig:exact}
\end{figure}

Below we briefly describe  the discretization of  inversion formula  \eqref{eq:invA2D}.
The numerical reconstruction uses discrete samples
$  \gdata \left[ \mf, \nf   \right]  = g\left( \theta_\mf, t_{\nf}  \right)$
for $ (\mf,  \nf  ) \in \left\{1, \dots, \Nangle \right\} \times   \left\{1, \dots, \Ntime \right\}$
where  $\theta_{\mf} = 2 \pi \left( \mf- 1\right)/\Nangle$ and $\nf = T \left( \nf -1 \right)/\Ntime$.
First,  the Fourier coefficients $\gdata_k$ in the angular variable are computed by the FFT algorithm.
Next, the cosine transform is approximated by $
\Cos \{ \gdata_k \}[j] := \frac{T}{\Ntime} \sum_{\nf = 1 }^{\Ntime}    \gdata_k [ \nf ] \cos  ( \omega_{ j, k  } t_{\nf}  )$. This is implemented by matrix-vector
multiplication where the entries $\cos  ( \omega_{ j, k  } t_{\nf}  )$ are pre-computed and stored.  Finally, we evaluate
\eqref{eq:invA2D} by truncating both sums.
Formula  \eqref{eq:invB2D} is discretized in an analogous manner.
This results in  the discrete versions of  \eqref{eq:invA2D},  \eqref{eq:invB2D}
\begin{align}\label{eq:inv-d}
f(\xx)  &= \frac{4}{\pi \sqrt{2\pi}} \sum_{ k = - \Nangle/2  }^{\Nangle/2 -1 }  ~
\bkl{  \sum_{j = 1}^{\Nrad}        \frac{    J_{k}
 	\left(  \omega_{j,k}  \rho \right) }{\omega^2_{j,k } J_{k+1}\left(  \omega_{j,k}    \right)^3 }   \Cos \{ \gdata_k \}[j]  }
	e^{ik \varphi}
	\\ \label{eq:inv-d2}
f(\xx)  &=  \frac{4}{\pi \sqrt{2\pi}} \sum_{ k = - \Nangle/2  }^{\Nangle/2 -1 }  ~
\bkl{  \sum_{j = 1}^{\Nrad}        \frac{    J_{k}
 	\left(  \omega_{j,k}  \rho \right) }{\omega_{j,k } J_{k+1}\left(  \omega_{j,k}    \right)^3 }   \Sin  \left\{  t \gdata_k \right\} [j]  }
	e^{ik \varphi}	
	\,.
\end{align}
In both formulas, the  inner sums  are  evaluated by matrix-vector   multiplications where the required matrix  elements are pre-computed and stored.
The outer sums are  evaluated  with  the inverse FFT algorithms.
Application of \eqref{eq:inv-d} (or \eqref{eq:inv-d2}) with a standard Matlab
implementation on a desktop PC with $\SI{16}{\giga\byte}$ RAM  and
$\SI{3.40}{\mega\hertz}$ eight-core processor with $\Nangle =300$, $\Nrad=180$ and $\Ntime=1200$ takes about $\SI{1.2}{\sec}$.
{\rot Note that $\omega_{j,k}$ are the positive roots of $J_k$ and therefore
 $J_{k+1} (  \omega_{j,k}   ) \neq 0 $ which implies that neither \eqref{eq:inv-d} nor \eqref{eq:inv-d2} suffer from a division by zero problem. }

\begin{figure}[htb!]
	\centering
	\includegraphics[width=0.48\textwidth]{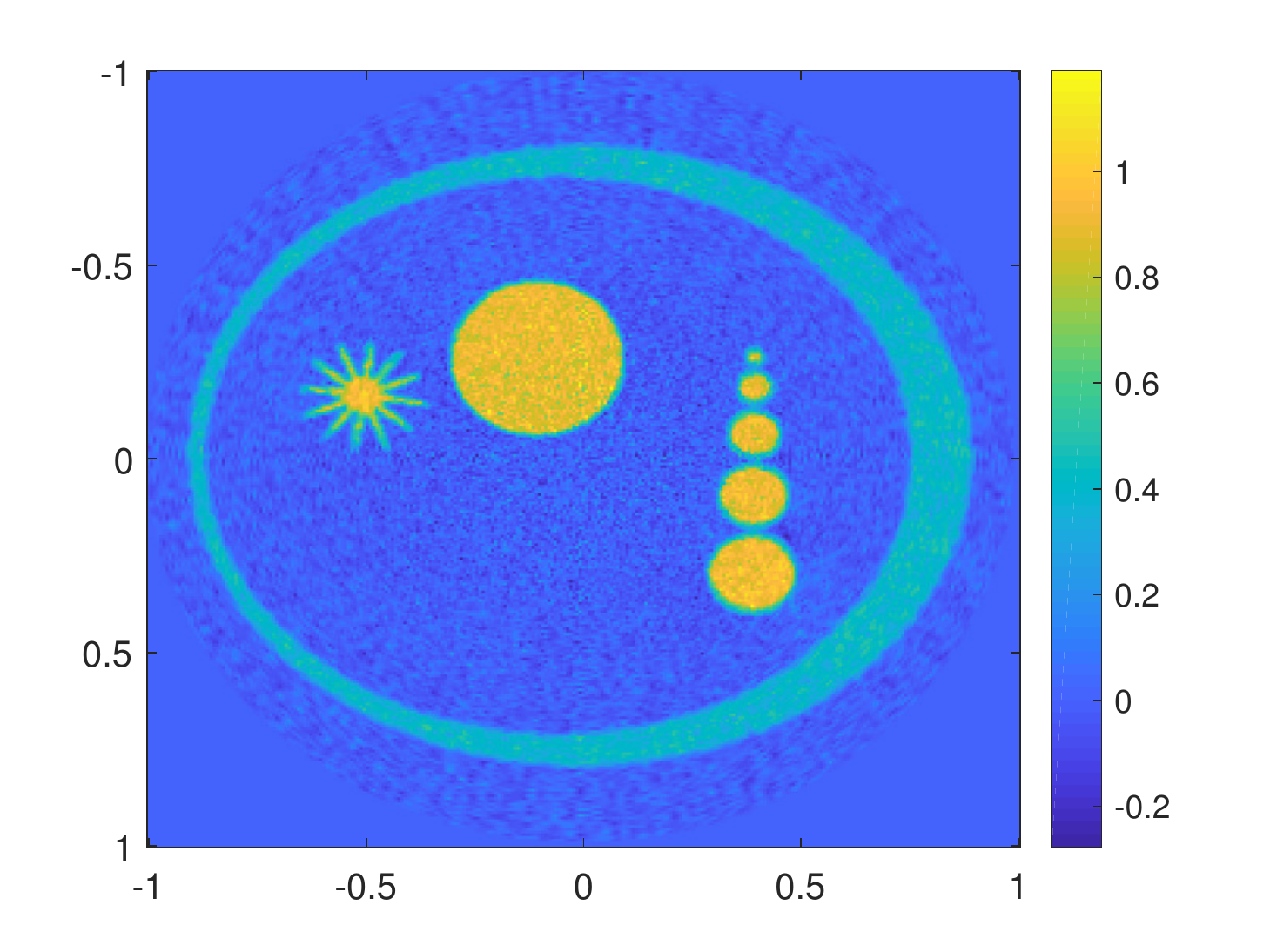}
	\includegraphics[width=0.48\textwidth]{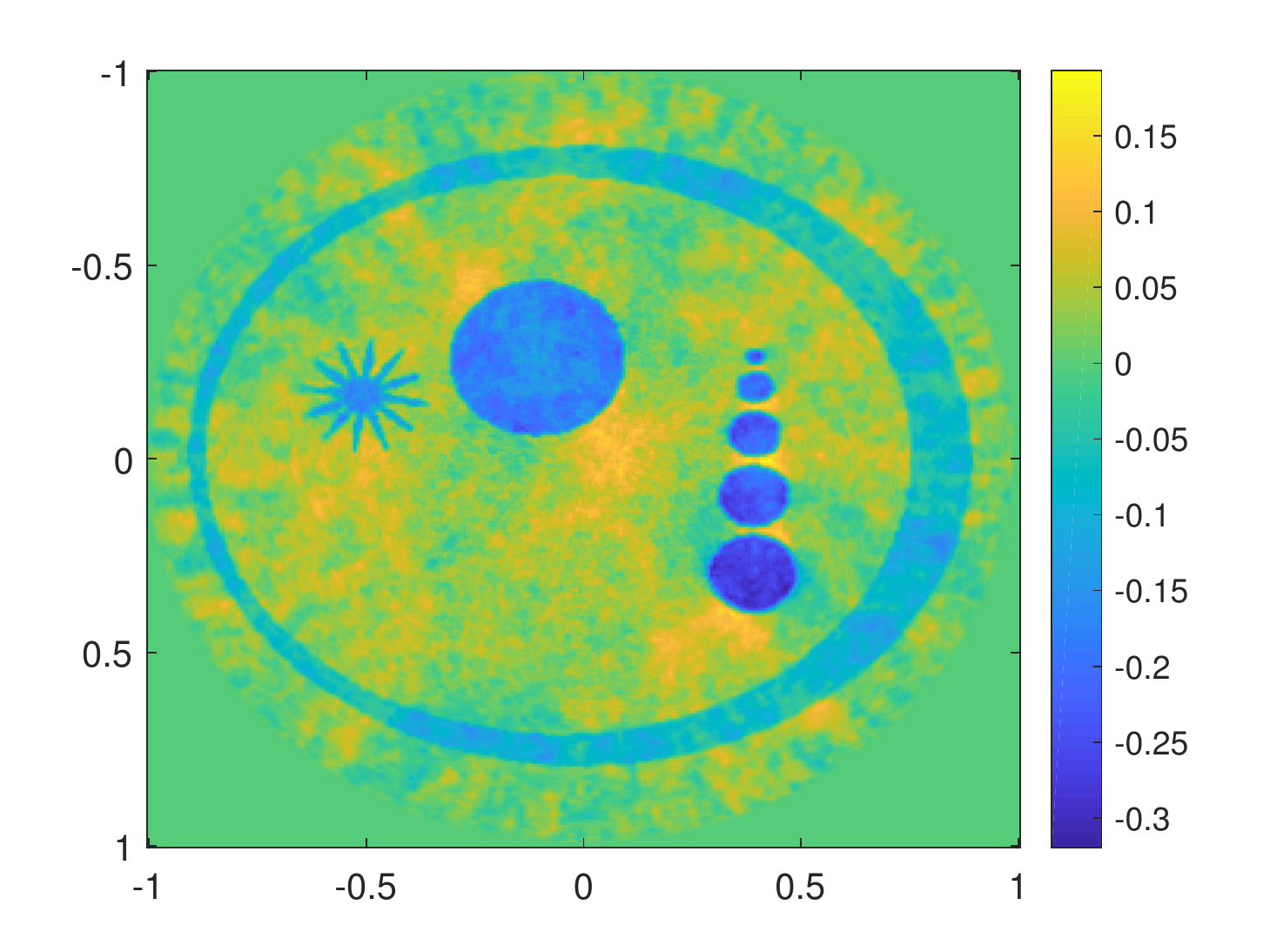}\\
	\includegraphics[width=0.48\textwidth]{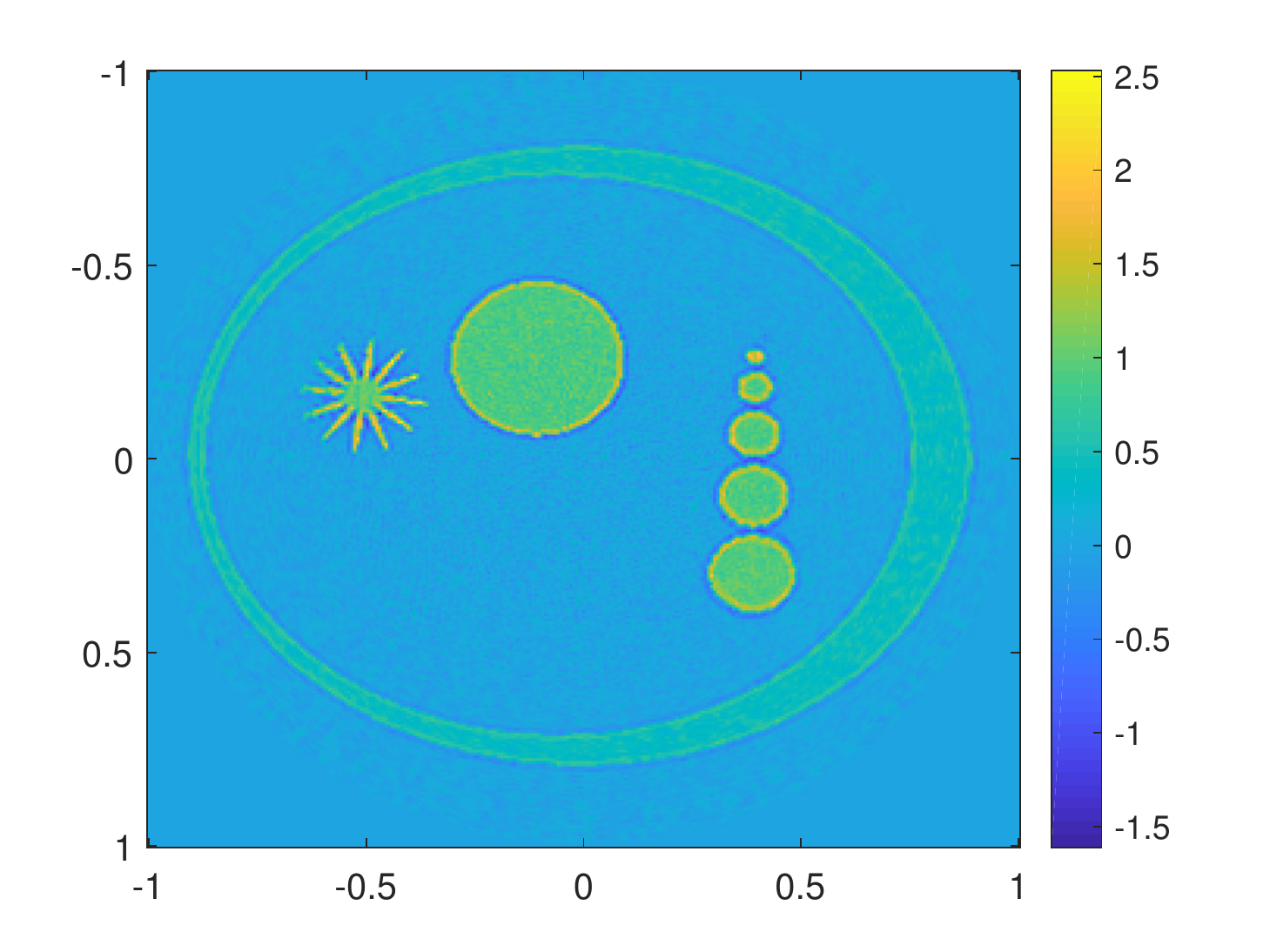}
	\includegraphics[width=0.48\textwidth]{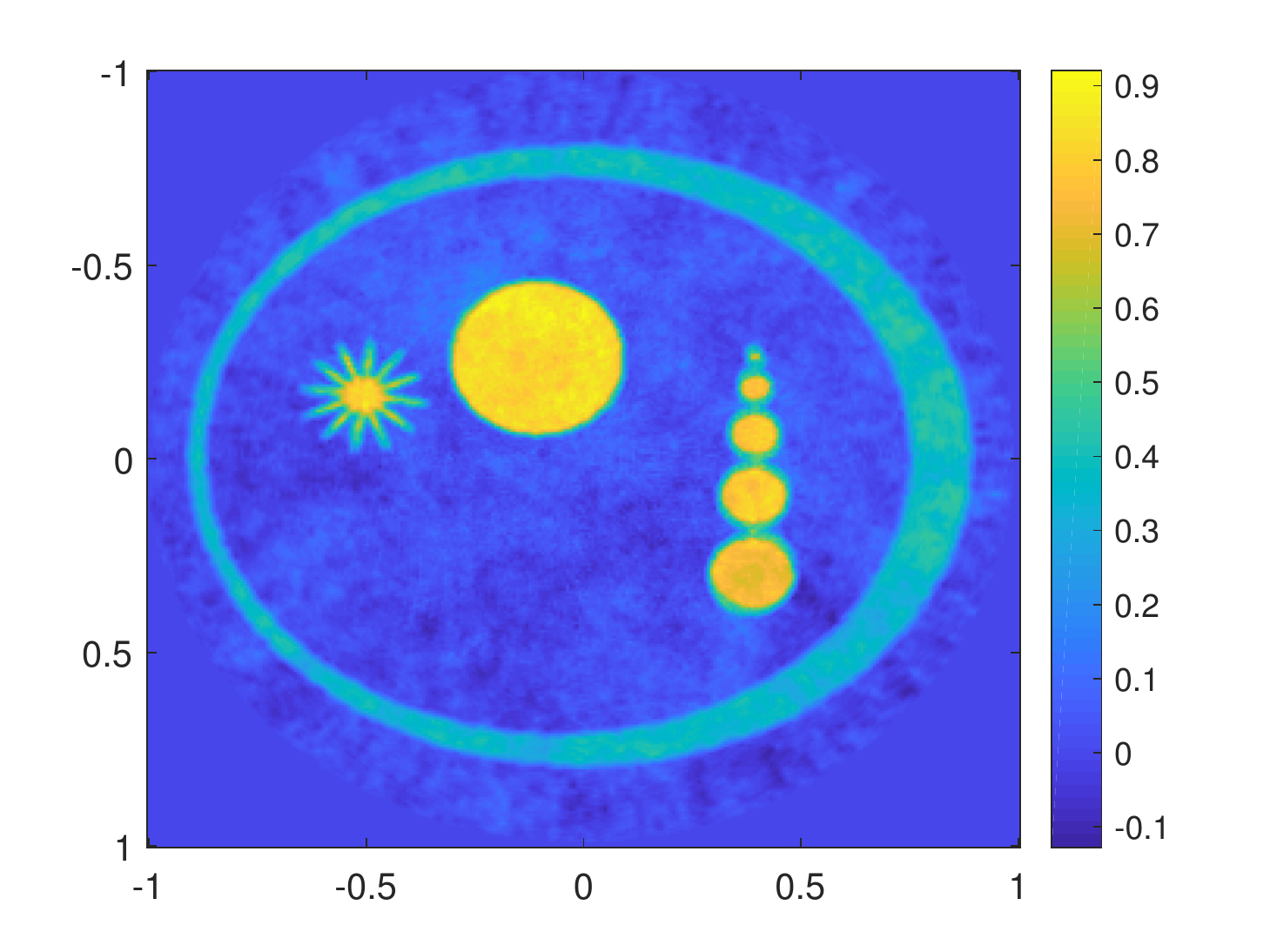}\\
	\includegraphics[width=0.48\textwidth]{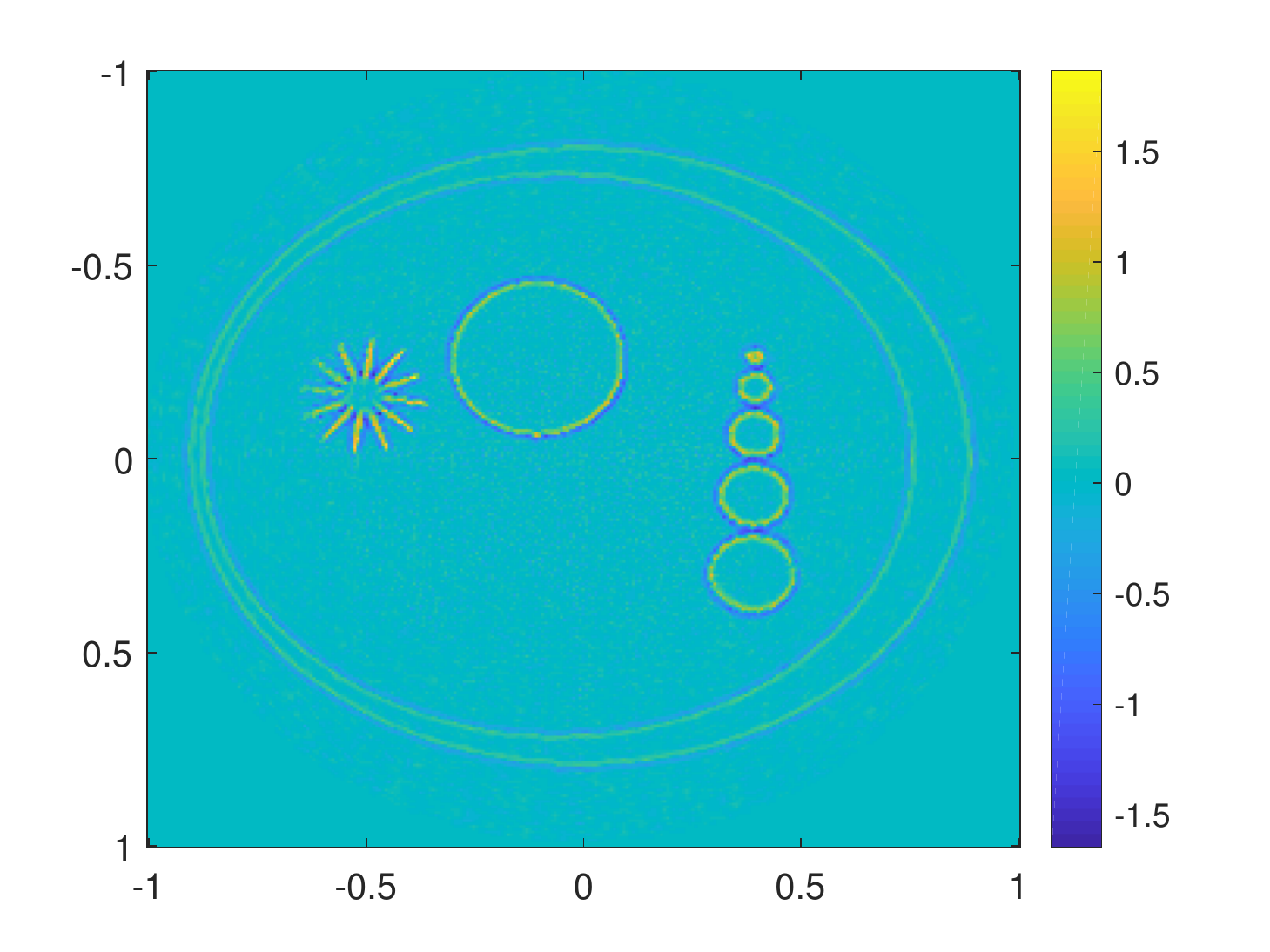}
	\includegraphics[width=0.48\textwidth]{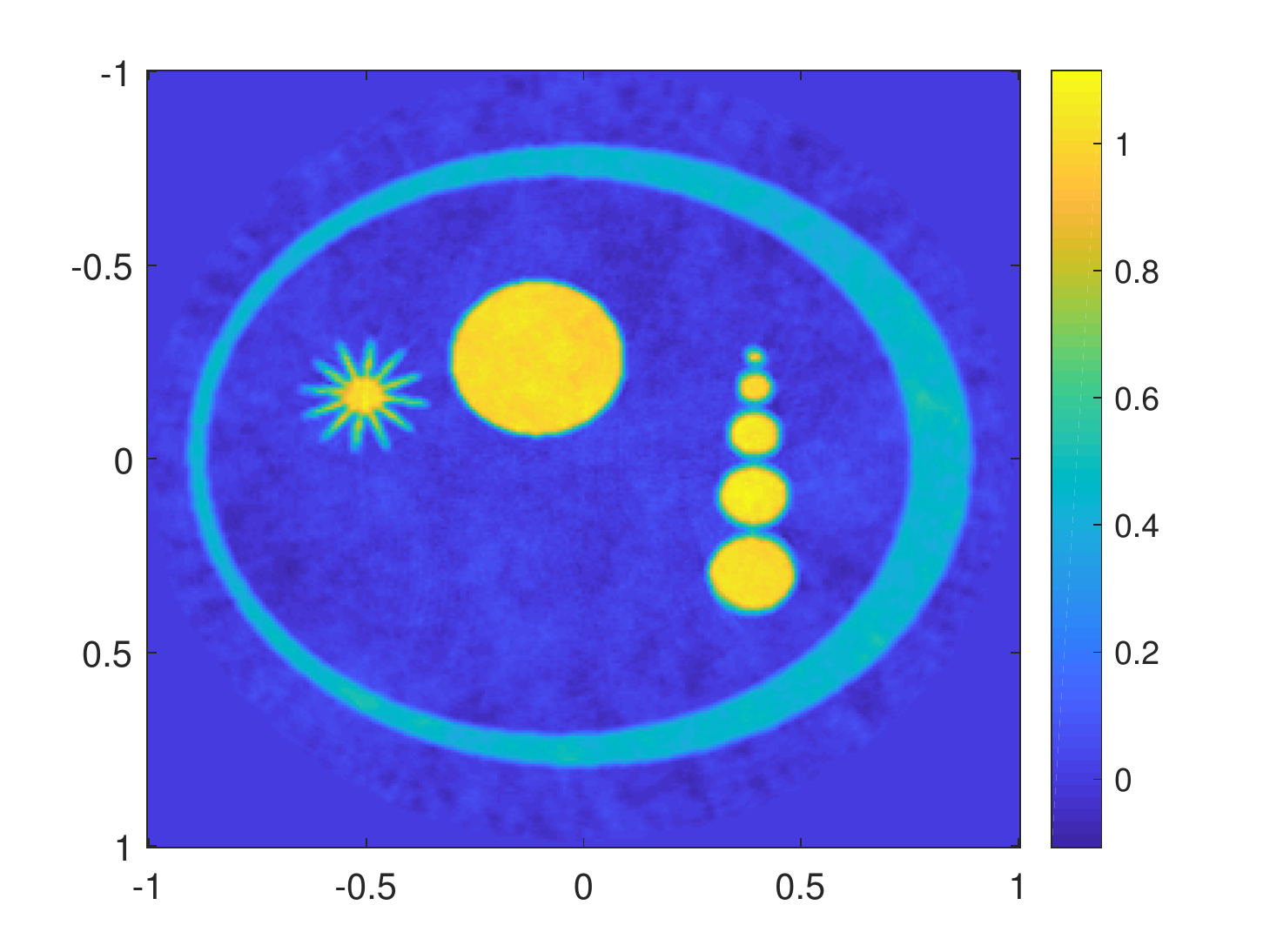}
	\caption{\textsc{Noisy data reconstruction:}
	The  left column corresponds to reconstructions with formula
	\eqref{eq:invB} (exact for $\cb= 0$)  and the  right column to    reconstructions with formula
	\eqref{eq:invA} (exact for $\cb\neq  0$).
	Top: Standard PAT  data $\Mo_{1,0} f +\xi_{1,0}$. Middle: Mixed data $\Mo_{1,1} f
+ \xi_{1,1}$.
	Bottom: Normal derivative  $\Mo_{0,1} f + \xi_{0,1}$.
Only for the  top left, middle right and
	bottom right images the applied inversion formula matches the data.
	}\label{fig:noisy}
\end{figure}

\subsection{Reconstruction results}
{ \rot

Figure \ref{fig:exact} shows  reconstruction results for the simulated data without noise using both formulas \eqref{eq:inv-d},
\eqref{eq:inv-d2}  applied to all three data cases $\Mo_{1,0}f$ (top), $\Mo_{1,1}f$ (middle)
and $\Mo_{0,1}f$ (bottom).   The left  column show the  reconstruction with
 $c_2=0$ formula   and the right column  the results with the $c_2 \neq 0$ formula.
 Note that up to discretization error and truncation error at time $T=6$,
Theorem~\ref{thm:inv}  shows that \eqref{eq:inv-d}  is exact
for $\Mo_{1,0}f$, and  that \eqref{eq:inv-d}  is exact
for $\Mo_{1,1}f$ and $\Mo_{0,1}f$. The numerical results shown in Figure
\ref{fig:exact} support these theoretical findings.
Reconstruction results for noisy  are shown in Figure~\ref{fig:noisy},
which again support exactness of inversion formula and further shows
their stability  with respect to Gaussian noise.
A quantitative error analysis is shown in Figure~\ref{fig:err},
where the relative reconstruction error for cases is shown
in dependence of the noise level of the data.

\begin{figure}[htb!]
	\centering
	\includegraphics[width=0.48\textwidth]{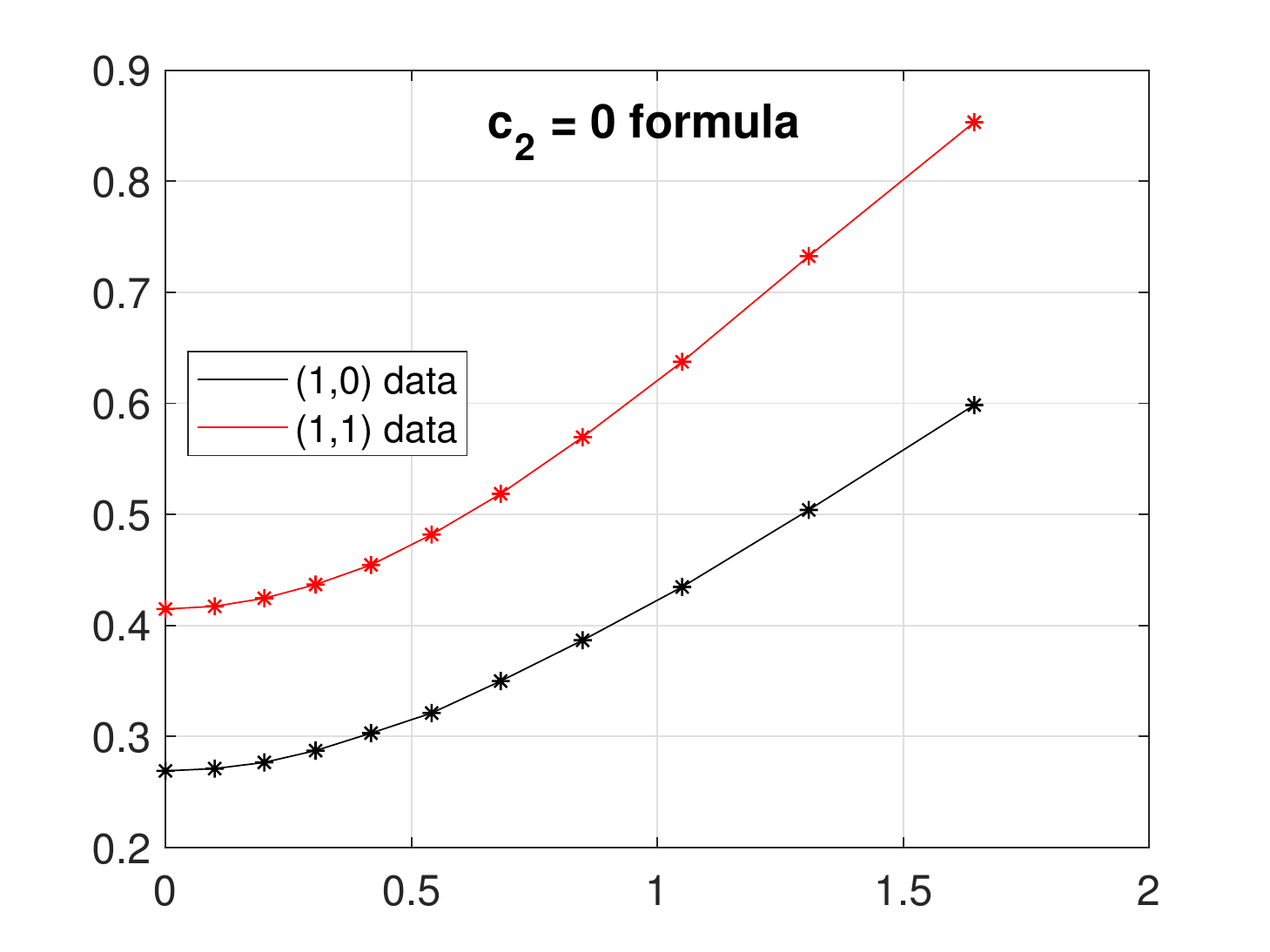}
	\includegraphics[width=0.48\textwidth]{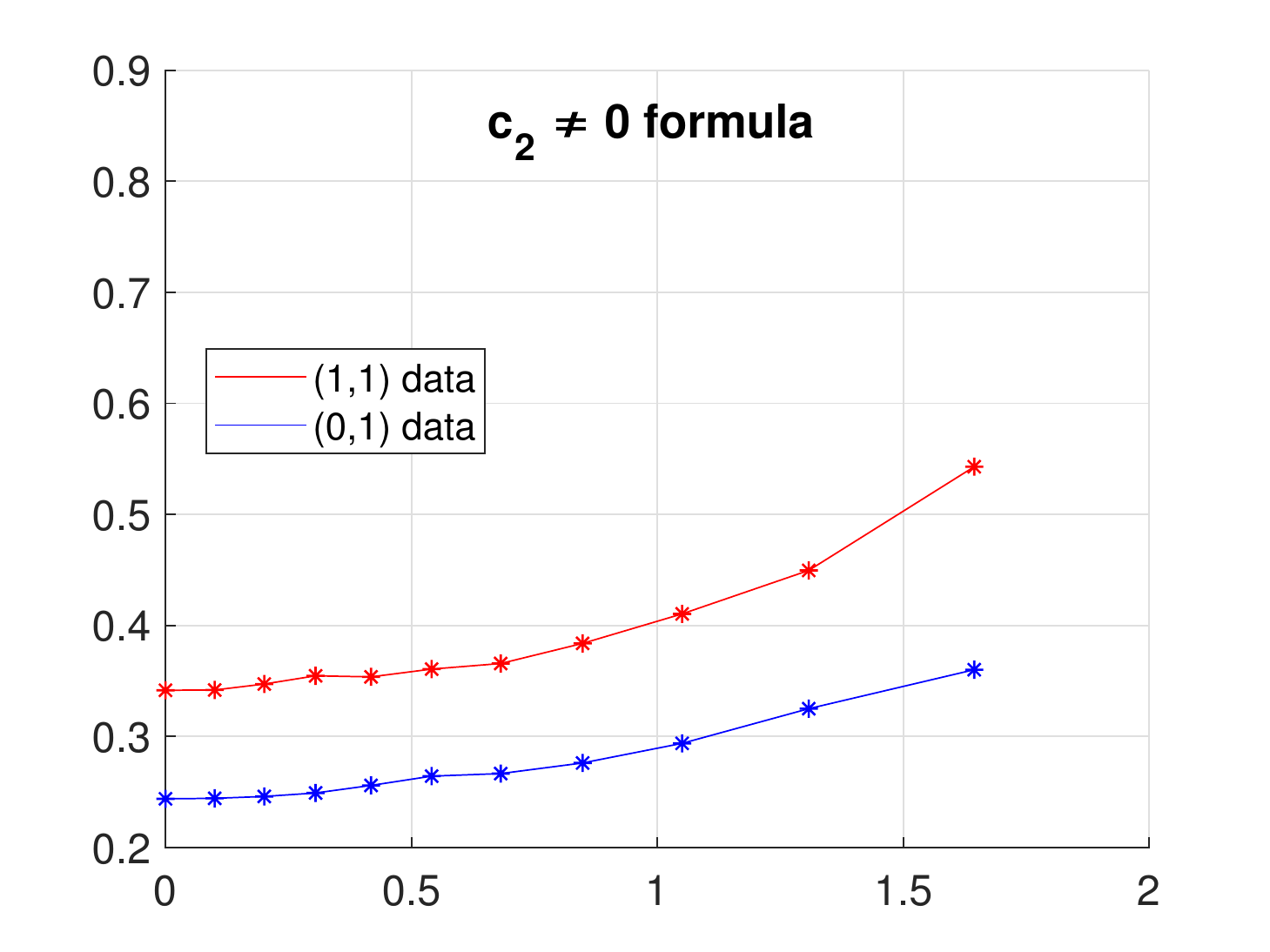}
	\caption{\rot \textsc{Quantitative error analysis:}
	The plots show the relative reconstruction  error
	$\norm{f-f_{\rm rec}}_2/\norm{f}_2$ (vertical axis)
	versus the relative data error
	$\norm{g - g^\delta}_2/\norm{g}_2$ (horizontal axis). Note that the
    axis are not in percent, such that  value 1 on the axis corresponds to
	$\SI{100}{\percent}$. Left: results for formula \eqref{eq:invB}.
	Right: results for formula \eqref{eq:invA}.  }\label{fig:err}
\end{figure}

}
\subsection{Discussion}

First, note that the theoretically exact inversion formulas \eqref{eq:invA} and \eqref{eq:invB} use data for all times whereas the discrete  counterparts  only use data up to finite time.
Nevertheless,  the discrete formula \eqref{eq:inv-d} applied to $\Mo_{1,0}f$ and \eqref{eq:inv-d2} applied to $\Mo_{0,1}f$ give visually satisfactory results.
This  is consistent with our previous observations \cite{burgholzer2007temporal,FinHalRak07,haltmeier2007thermoacoustic}.
However,  the  relative reconstruction  error  is quite large, even for
data without noise.   We mainly  address this due to the required truncation
of the data.  In future work we will therefore address  this issue and aim deriving  exact inversion  formula  which only use data until a finite time $T$.x

Second, up to discretization and truncation errors, due to the range condition  in Corollary \ref{cor:range},  the inversion formula  \eqref{eq:inv-d} applied    to classical PAT data  $\Mo_{1,0} f$ should yield the zero image.
From Figures \ref{fig:exact} (top right) and \ref{fig:noisy} (top right)
 we see that this is clearly  not the case. Again, this is mainly due to the data
 truncation, which we have  verified (result not shows) by varying $T$.
 The introduced violation of the range condition is also the reason that
 the   inversion formula applied to $\Mo_{1,1} f$ yields worse results
 then the result  for  $\Mo_{0,1} f$.  {\rot Finally, applying the inversion formula \eqref{eq:invB}
 to data $\Mo_{1,1} f$ results in amplified boundary structures.
 Visually the amplification is quite appealing which may be the reason that
 commonly data are modeled by $\Mo_{1,0} f$ instead of the more general
 model $\Mo_{\ca,\cb} f$. Quantitative analyzing the effect of $\Mo_{0,1} f$
 component to \eqref{eq:invB} is an interesting open issue. }

\section{Conclusion}
\label{sec:conclusion}

We investigated  PAT with the  direction dependent data model \eqref{eq:data},
which uses linear combinations $\Mo_{\ca, \cb} f  = \ca \Mo_{1,0} f + \cb
\Mo_{0,1} f$ of the acoustic pressure and its normal derivative. We developed an
 exact and  stable reconstruction formula for the special case of spherical detection geometry. Numerical results show the validity of the proposed approach. Investigating   such data for more general detection geometries in PAT is an interesting line of future research.
 Moreover, deriving an explicit inversion formula  that only used  data  for
 $t \leq T < \infty$ is an important future aspect.
 Finally, in  future work we will test our
 formulas on experimental data and investigate which values of  $\ca$, $\cb$
 actually are the best to accurately model experimental PAT data,
 for example using small piezoelectric sensors or piezoelectric
 line sensors.

\section*{Acknowledgments}
The work of G.Z. and M.H. has been supported by the Austrian Science Fund (FWF),
project P 30747-N32. The work of S.M. has been supported by the
National Research Foundation of Korea grant funded by the Korea
government (MSIP) (NRF-2018R1D1A3B07041149).


\end{document}